\documentclass[12pt, twoside, leqno]{article}
 \usepackage{amsmath,amsthm}
 \usepackage{amssymb}
 
 \usepackage{enumerate}

 \usepackage{amsmath, color}
 \usepackage{cite}

 \pagestyle{myheadings}
 \markboth{N. K. Shukla and S. C. Maury}{Super-wavelets on local fields}
 
 
 
 \newtheorem{theorem}{Theorem}[section]
 \newtheorem{corollary}[theorem]{Corollary}
 
 \newtheorem{proposition}[theorem]{Proposition}

 \newtheorem{acknowledgement}[theorem]{Acknowledgement}


 
 \theoremstyle{definition}
 \newtheorem{defn}[theorem]{Definition}
 
 \newtheorem{example}[theorem]{Example}
 

 
 \numberwithin{equation}{section}

 
 \frenchspacing
 
 \textwidth=17cm
 \textheight=23cm
 \parindent=16pt
 \oddsidemargin=-0.5cm
 \evensidemargin=-0.5cm
 \topmargin=-0.5cm
 
 

 \begin{document}

 	\baselineskip=17pt

 	\title{Super-wavelets on local fields of positive characteristic}

 	\author{
 		Niraj K. Shukla and Saurabh Chandra Maury
 		}
 	
 	\date{August 25, 2017}
 	
 	\maketitle

 	
 	\renewcommand{\thefootnote}{}
 	
 	\footnote{2010 \emph{Mathematics Subject Classification}: 42C40, 42C15, 43A70.}
 	
 	\footnote{\emph{Key words and phrases}: Local fields,  multiwavelets, Parseval frame multiwavelet sets, super-wavelets, decomposable and extendable Parseval frame  wavelets.}
 	
 	\footnote{\emph{To be appear in}:  Mathematische Nachrichten}
 	
 	\renewcommand{\thefootnote}{\arabic{footnote}}
 	\setcounter{footnote}{0}
 	
 	
 	\begin{abstract}
 	The concept of super-wavelet   was  introduced by   Balan, and Han and Larson  over the field of real numbers which has many applications not only in engineering branches but also in different areas of mathematics.   To develop  this  notion  on local fields having  positive characteristic we obtain   characterizations of super-wavelets of finite length as well as  Parseval frame  multiwavelet  sets of finite order in this setup. Using the group theoretical approach based on coset representatives, further  we establish  Shannon type multiwavelet in this perspective  while providing   examples of Parseval frame (multi)wavelets and (Parseval frame) super-wavelets.  In addition, we  obtain necessary conditions for  decomposable and extendable Parseval frame wavelets  associated to Parseval frame super-wavelets.
 	\end{abstract}

 	\section{Introduction}
 	
 	Having applications in signal processing, data compression and image analysis, super-wavelets solve the problems of multiplexing in networking, which consists of sending multiple signals or streams of information on a carrier at the same time in the form of a single, complex signal and then recovering the separate signals at the receiving end. 
 	The concept of super-wavelets was   introduced by Balan in \cite{B1999}, Han and Larson in  \cite{HL2005}  as follows: \textit{A super-wavelet of length $n$ is an $n$-tuple $(f_1,f_2,...,f_n)$ in the direct sum Hilbert space $\displaystyle \bigoplus_{n} L^2(\mathbb R)$, such that the coordinated dilates of all its coordinated translates form an orthonormal basis for $\displaystyle \bigoplus_{n} L^2(\mathbb R)$}.   Here, every $f_i$ is known as a \textit{component} of the super-wavelet.

 	Our main goal is to  develop the theory of super-wavelets in the setting of local fields having positive characteristic while for the local field, Jiang, Li and Jin in \cite{JLJ2004} introduced the concepts of multiresolution analysis (MRA) in which the ring of integers plays an important role. By the \textit{local field}, we mean a finite characteristic field which is locally compact, non-discrete, and totally disconnected. Actually, such fields (for example:  Cantor dyadic group,  Vilenkin $p$-groups) have a   formal power series over a finite fields $GF(p^c)$. If $c=1$, it is a $p$-series field while for $c\neq 1$, it is an algebraic extension of degree $c$ of a $p$-series field. As an  application point of view, such fields are very much useful in computer science, cryptographic protocols, etc.
 	
 	The notion of orthonormal multiwavelets,     multiwavelet sets, Parseval frame multiwavelets and  Parseval frame multiwavelet sets have been extensively studied by many authors for one dimensional as well as  higher dimensional Euclidean spaces \cite{BRS2001,DLS1997,D2005}, and further, these are developed in different perspectives,   namely,  locally compact abelian groups, local fields, $p$-adic fields $\mathbb Q_p$,        Vilenkin $p$-groups,   etc.  \cite{KSS2009-1, AKS2011,AK,AS2012,BJ2012,BJ2015,B2004,BB2004,F2008,HLPS1999,L1996, SV2015, SMM2017}  by a large number of researchers.   
 	
 	Dahlke   introduced the concept of wavelets  in locally compact abelian groups \cite{D1993} while it was  generalized to abstract Hilbert spaces by   Han,   Larson,   Papadakis and   Stavropoulos \cite{HLPS1999}. Further,        Benedetto and   Benedetto  developed a wavelet theory for local fields and related groups in \cite{B2004,BB2004}.  At this juncture, it is pertinent to mention that  Khrennikov with his collaborators  in \cite{KSW2013}          introduced new ideas  to construct various infinite-dimensional multiresolution analyses (MRAs) and further for an application point of view, they  developed the theory of pseudo-differential operators and equations over the ring of adeles as well. A rigorous study of wavelets on $p$-adic field $\mathbb Q_p$  and its related property has been done by many authors including Albeverio, Khrennikov and Skopina \cite{AKS2011,  AKS2010, AK,AS2012,KSW2013,KSS2009,KSS2009-1}.

 	During the development of super-wavelets for the local fields, we obtain a characterization of Parseval frame multiwavelet  sets of finite order in Section 3 that also characterizes all multiwavelet sets. In the same section, we provide Shannon type multiwavelet along with some other examples of Parseval frame    (multi)wavelet sets which are associated with multiresolution analysis. Further, in Section 4,  we obtain two characterizations in which  one for super-wavelets,  and other for  super-wavelets whose each components are minimally supported, while  providing    examples  of   super-wavelets of   length $n$.   In the last section, the decomposable frame wavelets   and their properties are discussed. A rigorous study of super-wavelets and decomposable frame wavelets for the Euclidean spaces has been done by many authors in the references \cite{BDP2005,DDG2004,DL2011,DJ2007,GH2005,HL2005,ZX2012}.

 	In the next section, we  give a brief introduction about local fields. More details about the same can be seen in a   book by Taibleson \cite{T1975}.

 	
 	\section{Preliminaries on  local fields}
 	Throughout the paper, $K$ denotes a local field. By a local field we mean a field which is locally compact, non-discrete, and totally disconnected.   The set 
 	$$
 	\mathcal O=\left\{x\in K:\ |x|\leq 1\right\}
 	$$ 
 	denotes the ring of integers which is a unique maximal compact open subring of $K$, where the absolute value  $|x|$ of $x \in K$ satisfies the properties (for more details, we refer  \cite{T1975}):
 	\begin{enumerate}
 		\item[(i)] $  |x|=0$ if and only if $x=0$
 		\item[(ii)]  $  |xy|=|x||y|$, and   
 		\item[(iii)]	$ \ |x+y|\leq \mbox{max}\left\{{|x|,\; |y|}\right\}$, for all $x,\ y\in K$. The equality holds in case of $|x|\neq |y|$.  
 	\end{enumerate}
 	Further, we   consider   a maximal and prime ideal 
 	$$
 	\frak P=\left\{{x\in K:\ |x|<1}\right\}
 	$$
 	in $\mathcal O$, then $\mathfrak {P}=\mathfrak {p}\mathcal O,$ for an element $\mathfrak p$ (known as \textit{prime element}) of $\mathfrak P$ having maximum absolute value in view of totally disconnectedness of $K$, and hence, $\mathfrak P$ is compact and open. Therefore, the residue space $\mathcal Q= {\mathcal O}/{\mathfrak {P}}$ is isomorphic to a finite field $GF(q)$, where $q=p^c$ for some prime $p$ and positive  integer $c.$

 	For a measurable subset $E$ of $K$, let 
 	$$
 	|E|=\int_{K}\chi_{E}(x)dx,
 	$$
 	where $\chi_{E}$ is the characteristic function of $E$ and $dx$ is the Haar measure for $K^+$  (locally compact additive group of K), so $|\mathcal O|=1$. By decomposing $\mathcal O$ into $q$ cosets of $\mathfrak P$, we have $|\mathfrak{P}|=q^{-1}$ and $|\mathfrak{p}|=q^{-1}$, and hence for $x\in K\backslash \{0\}=:K^*$ (locally compact multiplicative group of K), we have $|x|=q^k$, for some $k \in \mathbb Z.$ Further, notice that $\mathcal O^*:=\mathcal O\backslash \frak P$ is the group of units in $K^*$, and for  $x\neq 0$,   we may write $x=\frak p^k x'$ with $x'\in \mathcal O^*$. In the sequel, we denote $\mathfrak{p}^k \mathcal {O}$ by $\mathfrak P^k$, for each $k \in \mathbb Z$ that is known as \textit{fractional ideal}. Here, for    $x \in \mathfrak{P}^k$, $x$ can be expressed uniquely as $$
 	x=\sum^{\infty}_{l=k}c_l\ \frak p^l,\ c_l\in \frak U, \ \mbox{and} \  c_{k}\neq 0,
 	$$ 
 	where  $\frak U=\left\{c_i\right\}_{i=0}^{q-1}$ is  a fixed full set of coset representatives of   $\frak P$ in $\mathcal O$.
 	
 	Let $\chi$ be a fixed character on $K^+$ that is trivial on $\mathcal {O}$ but is nontrivial on $\frak P^{-1}$, which  can be  found   by starting with nontrivial character and rescaling.  For $y\in K$, we define 
 	$$
 	\chi_y(x)=\chi(yx),   x\in K.
 	$$
 	For $f\in L^1(K)$,  the \textit{Fourier transform} of $f$ is the function $\hat{f}$ defined by
 	$$\hat{f}(\xi)=\int_{K}f(x)\overline{{\chi_\xi(x)}}dx=\int_{K}f(x)\chi(-\xi x)dx,$$
 	which can be extended for $L^2(K)$. 
 	
 	Notation $\mathbb N_0:=\mathbb N\cup \left\{0\right\}$. Let $\chi_u$ be any character on $K^+$. Since $\mathcal {O}$ is a subgroup of $K^+$, it follows that the restriction $\chi_{u|_{\mathcal {O}}}$ is a character on $\mathcal {O}$. Also, as a character on $\mathcal {O}$, we have $\chi_u=\chi_v$ if and only if $u-v\in \mathcal {O}$.  Hence, we have the following result \cite[Proposition 6.1]{T1975}:
 	
 	\begin{theorem} \label{Thm2.1}
 		Let $\mathcal Z:=\left\{u(n)\right\}_{n\in \mathbb N_{0}}$ be a complete list of (distinct) coset representation of $\mathcal {O}$ in $K^+$. Then, the set  
 		$$
 		\left\{\chi_{u(n)|_{\mathcal O}}\equiv \chi_{u(n)}\right\}_{n\in \mathbb N_0}
 		$$
 		is a list of (distinct) characters on $\mathcal {O}$. Moreover, it is a complete orthonormal system on $\mathcal {O}$. 
 	\end{theorem}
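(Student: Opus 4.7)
The plan is to establish the theorem in three stages corresponding to the three assertions: (i) each $\chi_{u(n)}|_{\mathcal O}$ is a character of $\mathcal O$; (ii) these restricted characters are pairwise distinct; (iii) the resulting family is a complete orthonormal system in $L^2(\mathcal O)$.

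Claim (i) is immediate, since the restriction of a continuous group homomorphism $K^+\to\mathbb T$ to the subgroup $\mathcal O$ is again a continuous homomorphism. Claim (ii) follows directly from the fact recorded just above the statement of the theorem, namely that $\chi_u|_{\mathcal O}=\chi_v|_{\mathcal O}$ if and only if $u-v\in\mathcal O$; since $\{u(n)\}_{n\in\mathbb N_0}$ is a set of distinct coset representatives of $\mathcal O$ in $K^+$, we have $u(n)-u(m)\notin\mathcal O$ whenever $n\neq m$.

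For orthonormality I would evaluate
$$
\langle\chi_{u(n)},\chi_{u(m)}\rangle_{L^2(\mathcal O)}=\int_{\mathcal O}\chi\bigl((u(n)-u(m))x\bigr)\,dx.
$$
When $n=m$ the integral equals $|\mathcal O|=1$. When $n\neq m$, set $w=u(n)-u(m)\notin\mathcal O$, so that $|w|=q^k$ for some $k\geq 1$ and $w\mathcal O=\mathfrak{P}^{-k}\supseteq\mathfrak{P}^{-1}$. Since $\chi$ is nontrivial on $\mathfrak{P}^{-1}$, there exists $y\in\mathfrak{P}^{-1}$ with $\chi(y)\neq 1$; writing $y=wx_0$ with $x_0\in\mathcal O$ shows that $x\mapsto\chi(wx)$ is a nontrivial character on $\mathcal O$, and the standard translation-invariance argument for Haar integrals of nontrivial characters of a compact group forces the integral to vanish.

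The main obstacle is completeness. I would suppose $f\in L^2(\mathcal O)$ is orthogonal to every $\chi_{u(n)}$ and deduce $f=0$. Extending $f$ by zero to $\tilde f\in L^1(K)\cap L^2(K)$, the orthogonality reads $\widehat{\tilde f}(u(n))=0$ for every $n\in\mathbb N_0$. The crucial observation is that $\chi$ is trivial on $\mathcal O$, so for any $v\in\mathcal O$ and $\xi\in K$,
$$
\widehat{\tilde f}(\xi+v)=\int_{\mathcal O}f(x)\chi(-\xi x)\chi(-vx)\,dx=\widehat{\tilde f}(\xi),
$$
that is, $\widehat{\tilde f}$ is $\mathcal O$-periodic. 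Since $\{u(n)\}$ exhausts the cosets of $\mathcal O$ in $K^+$, this periodicity together with $\widehat{\tilde f}(u(n))=0$ yields $\widehat{\tilde f}\equiv 0$; by Plancherel $\tilde f=0$ and hence $f=0$ in $L^2(\mathcal O)$. The delicate point in this final step is reconciling the countable set of prescribed zeros with the uncountable dual group $K^+$, which succeeds precisely because $\mathcal O$ is its own annihilator under the self-duality of $K^+$ given by $\chi$, so $\mathcal O$-periodicity collapses all information to one representative per coset.
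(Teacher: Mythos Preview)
Your proof is correct. Each of the three stages is sound: the restriction argument for (i) is trivial; the distinctness in (ii) follows exactly as you say from the coset criterion stated just before the theorem; the orthonormality computation in (iii) is the standard one, and your observation that $w\mathcal O=\mathfrak P^{-k}\supseteq\mathfrak P^{-1}$ correctly locates a point where the restricted character is nontrivial. The completeness argument is also valid: since $\tilde f$ is supported in the compact set $\mathcal O$ and lies in $L^1\cap L^2$, its Fourier transform is well defined pointwise, and your periodicity computation (using that $\chi$ is trivial on $\mathcal O$ and that $vx\in\mathcal O$ whenever $v,x\in\mathcal O$) shows $\widehat{\tilde f}$ is constant on each coset $u(n)+\mathcal O$, hence identically zero, whence $f=0$ by Plancherel.

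As for comparison with the paper: the paper does not supply a proof of this statement. It is quoted as a known result from Taibleson's book \cite{T1975} (Proposition~6.1 there) and used as background. Your argument is essentially the classical one underlying that reference; the completeness step can alternatively be phrased via Pontryagin duality (the annihilator of $\mathcal O$ under the self-pairing of $K^+$ by $\chi$ is $\mathcal O$ itself, so $\widehat{\mathcal O}\cong K^+/\mathcal O$ and the $\chi_{u(n)}$ exhaust the dual), but your direct Fourier-transform computation is equivalent and arguably more transparent.
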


 	Next, we proceed to impose a natural order on $\mathcal Z$ which is used to develop the theory of Fourier series on $L^2 (\mathcal O)$. For this,  we choose a set $\left\{1=\epsilon_0,  \epsilon_i\right\}_{i=1}^{c-1}\subset \mathcal O^*$ such that the vector space $\mathcal Q$ generated by $\left\{1=\epsilon_0,\epsilon_i\right\}_{i=1}^{c-1}$ is isomorphic to the vector space $GF(q)$ over  finite field $GF(p)$ of order $p$  as   $q=p^c$. For $n\in \mathbb N_{0}$ such that $0\leq n<q,$ we write 
 	$$
 	n=\sum_{k=0}^{c-1} a_{k}p^{k},
 	$$ 
 	where  $0\leq a_{k}<p$. By noting that $\left\{u(n)\right\}_{n=0}^{q-1}$ as a complete set of coset representatives of $\mathcal O$ in $\frak P^{-1}$ with $|u(n)|=q$, for $0< n <q$ and $u(0)=0$, we define  
 	$$
 	u(n)=(\sum_{k=0}^{c-1} a_k\epsilon_k) \mathfrak{p}^{-1}.
 	$$ 
 	Now, for $n\geq 0$, we write $n=\sum_{k=0}^s b_{k}q^{k}$, where $0\leq b_{k}<q$, and define 
 	$$
 	u(n)= \sum_{k=0}^s u(b_k)\frak {p}^{-k}.
 	$$ 
 	In general, it is not true that $u(m+n)= u(m)+u(n)$ for each non-negative $m, n$ but
 	\begin{align*}
 	u(rq^k+s)=  u(r)\frak p^{-k}+u(s), \  \mbox{if}\  r\geq 0, \ k\geq 0,  \ \mbox{and} \  0\leq s< q^{k}.
 	\end{align*}

 	Now, we sum up above in the following theorem (see,  \cite[Proposition 6.6]{T1975},  \cite{BJ2012}):

 	\begin{theorem} \label{Thm2.2}  
 		For $n\in \mathbb N_{0}$, let $u(n)$ be defined as above. Then, we have  
 		\begin{itemize}
 			\item [(a)]  $u(n)=0$ if and only if $n=0$. If $k\geq 1$, then we have $|u(n)|=q^{k}$ if and only if $q^{k-1}\leq n<q^{k}.$ 
 			
 			\item [(b)]  $\left\{u(k):\ k\in \mathbb N_{0}\right\}=\left\{-u(k):\ k\in \mathbb N_{0}\right\}$.
 			
 			\item [(c)]   For a fixed $l\in \mathbb N_{0}$, we have $\left\{u(l)+u(k):\ k\in \mathbb N_{0}\right\}=\left\{u(k):\ k\in \mathbb N_{0}\right\}$. 
 		\end{itemize}
 	\end{theorem}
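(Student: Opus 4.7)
The plan is to convert the recursive definition of $u(n)$ into a single explicit $\mathfrak{p}$-adic expansion, and then read off all three assertions from that expression using the ultrametric inequality and the positive characteristic of $K$. Writing $n=\sum_{k=0}^{s}b_{k}q^{k}$ with $0\le b_{k}<q$, and then each digit $b_{k}=\sum_{j=0}^{c-1}a_{k,j}p^{j}$ with $0\le a_{k,j}<p$, the recursive definition telescopes to
$$u(n)=\sum_{k=0}^{s}u(b_{k})\mathfrak{p}^{-k}=\sum_{k=0}^{s}c_{k}\,\mathfrak{p}^{-k-1},\qquad c_{k}:=\sum_{j=0}^{c-1}a_{k,j}\epsilon_{j}.$$
As $b_{k}$ varies over $\{0,1,\ldots,q-1\}$, the element $c_{k}$ ranges bijectively over the $q$ vectors $\sum a_{j}\epsilon_{j}$, which by the very construction of the $\epsilon_{j}$'s form a complete set of coset representatives of $\mathfrak{P}$ in $\mathcal{O}$; I will identify this set with $\mathfrak{U}$, and in particular $c_{k}\neq 0$ iff $b_{k}\neq 0$.

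For part (a), if $n\neq 0$ take $s$ maximal with $b_{s}\neq 0$, so that $q^{s}\le n<q^{s+1}$. Then $c_{s}\in\mathcal{O}^{*}$ gives $|c_{s}\mathfrak{p}^{-s-1}|=q^{s+1}$, while every other nonzero summand has absolute value $q^{k+1}$ with $k<s$, hence strictly smaller. The ultrametric inequality with distinct norms then forces $|u(n)|=q^{s+1}$. Setting $k=s+1$ gives the stated equivalence, and $u(0)=0$ is immediate from the expansion.

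For parts (b) and (c) the key observation is that the transversal $\mathfrak{U}$ is closed under both negation and addition. Indeed, since $p\cdot\epsilon_{j}=0$ in $K$, the coefficient of each $\epsilon_{j}$ in an expression $\sum a_{j}\epsilon_{j}$ may be reduced modulo $p$ freely, so $-c$ and $c+c'$ always have the form $\sum a''_{j}\epsilon_{j}$ with $0\le a''_{j}<p$. Consequently the set
$$\Lambda:=\{u(n):n\in\mathbb{N}_{0}\}=\Bigl\{\sum_{k\ge 0}c_{k}\,\mathfrak{p}^{-k-1}:c_{k}\in\mathfrak{U},\ \text{only finitely many nonzero}\Bigr\}$$
is an additive subgroup of $K^{+}$, and $n\mapsto u(n)$ is a bijection from $\mathbb{N}_{0}$ onto $\Lambda$. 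Part (b) is then the statement $-\Lambda=\Lambda$ and part (c) is $u(l)+\Lambda=\Lambda$, both automatic for a subgroup.

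The only real subtlety is the absence of $\mathfrak{p}$-adic carries in the additive law on $\mathfrak{U}$, which is precisely where positive characteristic enters essentially; in a characteristic-zero local field such as $\mathbb{Q}_{p}$ the carries would propagate upward in the expansion and a separate induction on the length of the base-$q$ representation would be needed in place of this clean group-theoretic argument.
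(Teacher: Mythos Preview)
The paper does not actually prove Theorem~2.2; it is stated with citations to Taibleson \cite{T1975} (Proposition~6.6) and \cite{BJ2012}, so there is no in-paper argument to compare against. Your proof is correct and self-contained: the explicit expansion $u(n)=\sum_{k}c_{k}\mathfrak{p}^{-k-1}$ with $c_{k}=\sum_{j}a_{k,j}\epsilon_{j}$ is exactly what the recursive definition yields, and part~(a) follows immediately from the ultrametric equality $|x+y|=\max\{|x|,|y|\}$ when $|x|\neq|y|$, once you observe that a nonzero $c_{k}$ lies in $\mathcal{O}^{*}$ (its image in $\mathcal{O}/\mathfrak{P}$ is a nontrivial $GF(p)$-combination of the basis $\epsilon_{j}$).

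Your treatment of (b) and (c) via the group structure is clean and is the place where the hypothesis that $K$ has positive characteristic is genuinely used: because $p\cdot 1=0$ in $K$, the ``digit set'' $\{\sum_{j}a_{j}\epsilon_{j}:0\le a_{j}<p\}$ is literally a subgroup of $(K,+)$, so no carries occur between the $\mathfrak{p}^{-k-1}$ slots and $\Lambda$ is a subgroup of $K^{+}$. One small remark: you write ``I will identify this set with $\mathfrak{U}$,'' but the paper's $\mathfrak{U}$ is an \emph{a priori} arbitrary transversal of $\mathfrak{P}$ in $\mathcal{O}$ and plays no role in the definition of $u(n)$ or in the statement of the theorem, so this identification is unnecessary; your argument only needs that the $c_{k}$'s range over a specific additive subgroup of $\mathcal{O}$ that is also a transversal, which you verify directly. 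The injectivity of $(c_{0},c_{1},\dots)\mapsto\sum_{k}c_{k}\mathfrak{p}^{-k-1}$ (needed for the bijection $\mathbb{N}_{0}\to\Lambda$) follows from the same ultrametric argument as in part~(a), so the proof is complete.
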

 	\noindent Following result and definition  will be used in the sequel [18]:
 	
 	\begin{theorem} \label{Thm2.3}
 		For all $l,\ k\in \mathbb N_{0},\ \chi_{u(k)}(u(l))=1.$
 	\end{theorem}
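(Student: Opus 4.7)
The plan is to establish the equivalent identity $\chi(u(k)u(l))=1$ by expanding $u(k)u(l)$ in its canonical series in powers of the prime element $\mathfrak{p}$ and verifying that the coefficient at $\mathfrak{p}^{-1}$ vanishes. Since $\chi$ is trivial on $\mathcal{O}$ and, under the Taibleson normalisation tacitly fixed by ``rescaling'' in the preceding paragraph, depends only on that single digit, the identity then follows at once.

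Using the recursion $u(rq^m+s)=u(r)\mathfrak{p}^{-m}+u(s)$ together with the explicit formula $u(b)=\bigl(\sum_{t=0}^{c-1}a_t\epsilon_t\bigr)\mathfrak{p}^{-1}$ for $0\le b<q$, I would iteratively expand $k$ and $l$ in base $q$ to obtain
\[
u(k)=\sum_{i\ge 0} w_i\,\mathfrak{p}^{-i-1},\qquad u(l)=\sum_{j\ge 0} v_j\,\mathfrak{p}^{-j-1},
\]
where $w_i,v_j\in\mathfrak{U}$ are determined by the base-$q$ digits of $k$ and $l$. Multiplying gives
\[
u(k)u(l)=\sum_{i,j\ge 0} w_iv_j\,\mathfrak{p}^{-i-j-2}.
\]

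Because $K$ has positive characteristic, the transversal $\mathfrak{U}$ may be identified with the residue field $\mathcal{Q}\cong GF(q)$ realised as a genuine subfield of $\mathcal{O}$, so every product $w_iv_j$ again lies in $\mathfrak{U}$ and addition within $\mathfrak{U}$ involves no carries across neighbouring powers of $\mathfrak{p}$. Consequently the canonical series for $u(k)u(l)$ contains only powers $\mathfrak{p}^{-i-j-2}$ with $i,j\ge 0$, all of exponent $\le -2$; in particular the $\mathfrak{p}^{-1}$-digit is zero, and therefore $\chi(u(k)u(l))=1$.

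The main obstacle I anticipate is making rigorous the dependence of $\chi$ on the $\mathfrak{p}^{-1}$-digit alone: the bare conditions stated in the text do not pin $\chi$ down beyond $\mathfrak{P}^{-1}$, so the identity implicitly relies on the specific normalisation in which $\chi$ is the lift of a nontrivial additive character of the residue field via the ``$-1$''-digit map. Once that convention is granted, the remainder is the purely combinatorial observation that two nonnegative integers $i,j$ can never sum to $-1$.
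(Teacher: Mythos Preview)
The paper does not give its own proof of this result; it merely quotes it from an external reference, so there is no in-paper argument to compare against.

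Your approach is the standard one and the combinatorial core is correct: in positive characteristic the coset representatives $\mathfrak U$ can be identified with the subfield $GF(q)\subset\mathcal O$, so products and sums of digits produce no carries, and the canonical expansion of $u(k)u(l)$ indeed involves only powers $\mathfrak p^{-m}$ with $m\ge 2$.

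The obstacle you flag is not a mere technicality but a genuine gap in the paper's hypotheses rather than in your argument. The conditions ``$\chi$ trivial on $\mathcal O$ and nontrivial on $\mathfrak P^{-1}$'' do \emph{not} force $\chi$ to depend only on the $\mathfrak p^{-1}$-digit: writing $K^+/\mathcal O\cong\bigoplus_{j\ge 1} GF(q)\cdot\mathfrak p^{-j}$, any choice of additive characters on the summands with the $j=1$ component nontrivial gives such a $\chi$, and then $\chi_{u(1)}(u(1))=\chi(\mathfrak p^{-2})$ can be any $p$-th root of unity. Thus the theorem is actually false for a generic $\chi$ meeting only the stated conditions; it holds precisely for the Taibleson character built from the trace of the $\mathfrak p^{-1}$-digit, which is the convention tacitly in force in the cited sources. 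Once that normalisation is made explicit, your proof goes through verbatim.
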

 	
 	\begin{defn}\label{Def2.4}
 		A function $f$ defined on $K$ is said to be  \textit{integral periodic} if $$f(x+u(l))=f(x), \ \  \textrm{for \ all} \ l\in \mathbb N_{0}, x\in K.$$
 	\end{defn}
 	\section{Parseval frame multiwavelet sets  for local fields}

 	Let $K$ be a local field of characteristic $p>0,\ \frak p$ be a prime element of $K$ and $u(n)\in K$ for $n\in \mathbb N_{0}$ be defined as above. Then a finite set $\Psi=\left\{\psi_m:\ m=1,\:2,...,M\right\}\subset L^2(K)$ is called a \textit{Parseval frame multiwavelet} of order $M$ in $L^2(K)$ if the system
 	$$
 	\mathcal A(\Psi):=\left\{\psi_{m, j, k}:=D^jT^{k}\psi_m:\ 1\leq m\leq M, \ j\in \mathbb Z,\ k\in \mathbb N_{0} \right\}
 	$$
 	forms a  Parseval frame   for $L^2(K)$, that means, for each $f \in L^2(K)$,
 	\begin{align*}
 	\hspace{3cm} 	\|f\|^2= \sum_{m=1}^M\sum_{(j, k) \in \mathbb Z \times \mathbb N_0}\left|<f, D^jT^{k}\psi_m>\right|^2,
 	\end{align*}
 	where the dilation and translation operators are defined as follows:
 	$$
 	D^j f(x)=q^{j / 2}f(\frak{p}^{-j}x), \ \mbox{and}\ T^k f(x)=f(x-u(k)), \ \ x \in K.
 	$$
 	If the system $\mathcal A(\Psi)$ is an orthonormal basis for $L^2(K)$, $\Psi$ is called an \textit{orthonormal multiwavelet} (simply,  \textit{multiwavelet}) of order $M$  in $L^2(K)$. In the case of Parseval frame system $\mathcal A(\{\psi\})$ for $L^2(K)$, $\psi$ is known as \textit{Parseval frame  wavelet}. Moreover, a Parseval frame multiwavelet $\Psi$ is known as \textit{semi-orthogonal} if $D^j  W \bot D^{j'}  W $,  for $j \neq j'$, where $W=\overline{\mbox{span}}\{T_k \psi: k \in \mathbb N_0, \psi \in \Psi\}$.

 	Notice that for $f \in L^2(K)$ and $\xi \in K$, we have
 	$$
 	\widehat{\left(D^jT^k f\right)}(\xi)=q^{-j/2} \chi_{u(k)}(-\mathfrak p^j \xi) \widehat{f} (\mathfrak p^j \xi), \ \ \mbox{for} \ j \in \mathbb Z, \ k \in \mathbb N_0.
 	$$
 	The following is a necessary and sufficient condition for the system $\mathcal A(\Psi)$ to be a Parseval frame   for $L^2(K)$ \cite{BJ2012}:
 	
 	\begin{theorem} \label{Thm3.1}
 		Suppose $\Psi=\left\{\psi_m:\ m=1,\:2,...,M\right\}\subset L^2(K).$ Then the affine system $\mathcal A(\Psi)$ is a Parseval frame   for $L^2(K)$
 		if and only if for a.e. $\xi,$ the following holds:
 		\begin{enumerate}
 			\item [(i)]  $\displaystyle\sum_{m=1}^M\sum_{j\in\mathbb Z} \left|\widehat{\psi}_m(\mathfrak p^{-j} \xi)\right|^2=1,$ \hfill  (3.1) 
 			\item [(ii)]  $\displaystyle \sum_{m=1}^M\sum_{j\in\mathbb N_0} \widehat{\psi}_m(\mathfrak p^{-j} \xi)\overline{\widehat{\psi}_m(\mathfrak p^{-j} (\xi+u(s))}=0,$ \ \ for   $s \in  \mathbb N_0\backslash q \mathbb N_0.$ \hfill (3.2)
 		\end{enumerate}
 	\end{theorem}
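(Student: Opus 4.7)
The plan is to prove Theorem~\ref{Thm3.1} by starting on the Fourier side, using Plancherel together with the transformation formula for $\widehat{D^jT^k\psi_m}(\xi)=q^{-j/2}\chi_{u(k)}(-\mathfrak p^j\xi)\widehat{\psi}_m(\mathfrak p^j\xi)$ recalled just above the statement. I would first fix a dense test class, say the set of $f\in L^2(K)$ for which $\widehat{f}$ is bounded and compactly supported inside $K\setminus\{0\}$, so that all manipulations below are absolutely convergent and Fubini applies; the general case follows by a standard density/polarization argument at the end.

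For such $f$, I would write
\begin{align*}
\langle f,D^jT^k\psi_m\rangle=q^{j/2}\int_{K}\widehat{f}(\mathfrak p^{-j}\eta)\overline{\widehat{\psi}_m(\eta)}\,\chi_{u(k)}(\eta)\,d\eta,
\end{align*}
after the substitution $\eta=\mathfrak p^j\xi$, and then periodize over a set of coset representatives of $\mathcal O$ in $K^+$. Using Theorem~\ref{Thm2.3} ($\chi_{u(k)}(u(l))=1$), the factor $\chi_{u(k)}$ is integral periodic, so summing $|\cdot|^2$ over $k\in\mathbb N_0$ reduces, via the orthonormal basis $\{\chi_{u(k)}\}$ of $L^2(\mathcal O)$ from Theorem~\ref{Thm2.1}, to the squared $L^2(\mathcal O)$-norm of the periodization. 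Unfolding that $L^2$-norm and changing variables back on $K$, I obtain
\begin{align*}
\sum_{m,j,k}|\langle f,D^jT^k\psi_m\rangle|^2=\sum_{m,j}\sum_{l\in\mathbb N_0}\int_{K}\widehat{f}(\xi)\overline{\widehat{f}(\xi+\mathfrak p^{-j}u(l))}\,\overline{\widehat{\psi}_m(\mathfrak p^j\xi)}\widehat{\psi}_m(\mathfrak p^j\xi+u(l))\,d\xi.
\end{align*}

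Next I would split the $l$-sum into the diagonal term $l=0$ and the rest. After the substitution $\xi\mapsto\mathfrak p^{-j}\xi$, the $l=0$ piece collapses to $\int_{K}|\widehat f(\xi)|^2\,\sum_{m,j}|\widehat\psi_m(\mathfrak p^{-j}\xi)|^2\,d\xi$, which equals $\|f\|^2$ for all test $f$ exactly when (3.1) holds a.e. For the off-diagonal terms, I would use the identity $u(rq^k+s)=u(r)\mathfrak p^{-k}+u(s)$ (specialized to $s=0$) to write each $l\in\mathbb N\setminus\{0\}$ uniquely as $l=q^a s$ with $s\in\mathbb N_0\setminus q\mathbb N_0$ and $a\geq 0$, so that $u(l)=\mathfrak p^{-a}u(s)$. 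Reindexing via $j'=j+a$ and again substituting $\xi\mapsto\mathfrak p^{-j'}\xi$, the $a$-summation and the $j$-summation decouple, and the off-diagonal contribution becomes (modulo rearrangement justified on the chosen dense class)
\begin{align*}
\sum_{s\in\mathbb N_0\setminus q\mathbb N_0}\sum_{j'\in\mathbb Z}\int_{K}\widehat f(\xi)\overline{\widehat f(\xi+\mathfrak p^{-j'}u(s))}\,\sum_{m}\sum_{a\geq 0}\overline{\widehat\psi_m(\mathfrak p^{-a}\cdot)\widehat\psi_m(\mathfrak p^{-a}(\cdot+u(s)))}\bigg|_{\mathfrak p^{j'}\xi}\,d\xi,
\end{align*}
which must vanish for all test $f$. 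Varying $f$ and applying a standard polarization argument (as in the Euclidean two-scale characterization) shows this is equivalent to (3.2) a.e.\ in $\xi$.

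The step I expect to carry most of the work is the off-diagonal reindexing: getting the bijection $l\leftrightarrow (a,s)$, verifying that the substitution $j\mapsto j+a$ exactly converts the sum $\sum_{a\geq 0}\sum_{j\in\mathbb Z}$ into the sum $\sum_{j'\in\mathbb N_0}$ appearing in (3.2), and justifying the interchange of sums and integrals on the test class. The converse direction requires the usual density argument: (3.1) together with (3.2) imply $\sum_{m,j,k}|\langle f,D^jT^k\psi_m\rangle|^2=\|f\|^2$ first on the test class, and then on all of $L^2(K)$ by continuity of both sides in $f$, completing the equivalence.
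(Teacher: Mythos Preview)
The paper does not actually prove Theorem~\ref{Thm3.1}; it is stated with a citation to Behera--Jahan \cite{BJ2012} and used as a black box thereafter, so there is no ``paper's own proof'' to compare against. Your sketch is precisely the standard Fourier-analytic argument one finds in that reference (and in the Euclidean prototype): Plancherel, periodization over cosets of $\mathcal O$, Parseval on $L^2(\mathcal O)$ via Theorem~\ref{Thm2.1}, and the diagonal/off-diagonal split with the $q$-adic decomposition of the translation index.

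One small slip to clean up: in your closing paragraph you say the reindexing ``converts the sum $\sum_{a\geq 0}\sum_{j\in\mathbb Z}$ into the sum $\sum_{j'\in\mathbb N_0}$ appearing in (3.2).'' That is not quite the bookkeeping. After setting $j'=j+a$, the outer sum over $j'$ still runs over all of $\mathbb Z$ (as you in fact wrote correctly in your displayed off-diagonal expression); it is the \emph{inner} sum over $a\geq 0$ that becomes the $\sum_{j\in\mathbb N_0}$ of condition~(3.2), since $\widehat\psi_m(\mathfrak p^{j}\xi)=\widehat\psi_m(\mathfrak p^{-a}\mathfrak p^{j'}\xi)$. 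With that label corrected, the outline is sound and matches the argument in the cited source.
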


 	In particular, $\Psi$ is a multiwavelet in $L^2(K)$ if and only if $\|\psi_m\|=1$,  for $1 \leq m \leq  M$,  and the above   conditions   (3.1) and (3.2)  hold.

 	In the sequel of development of wavelets associated with an MRA on local fields of positive characteristics,   Jiang,  Li and  Jin in \cite{JLJ2004}  obtained    a  necessary and sufficient condition  for the system   $\left\{\varphi(\cdot-u(k)): k \in \mathbb N_0\right\}$   to constitute an orthonormal system  which is   as follows:
 	\begin{align*}
 	\hspace{4cm}	\sum_{k\in \mathbb N_0}\left|\widehat{\varphi}(\xi+u(k))\right|^2=1, \ \ a.e. \ \xi,
 	\end{align*}
 	for any $\varphi \in L^2(K)$.
 	
 	Notice that for all $\xi\in K$,   $0 \leq \displaystyle\sum_{k\in \mathbb N_0}\left|\widehat{\varphi}(\xi+u(k))\right|^2\leq 1$ if $\widehat{\varphi}=\chi_{\frak p \mathcal O},$ since $\frak p \mathcal O \subset \mathcal O$,  and the system $\{\mathcal O +u(k): k \in \mathbb N_0\}$ is a measurable partition of $K$. 
 	The following is a   generalization of above characterization:
 	
 	\begin{theorem}\label{Thm3.2}  
 		Let  $\varphi \in L^2(K)$. Then a necessary and sufficient condition for the system $\left\{\varphi(\cdot-u(k)): k \in \mathbb N_0\right\}$ to be a Parseval   frame  for $\overline{\mbox{span}}\{\varphi(\cdot-u(k)): k \in \mathbb N_0\}$   is as follows:
 		\begin{align*}
 		\hspace{4cm} 0 \leq \sum_{k\in \mathbb N_0}\left|\widehat{\varphi} (\xi+u(k))\right|^2\leq 1, \ \ a.e. \ \xi.
 		\end{align*}
 	\end{theorem}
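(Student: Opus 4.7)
The plan is to unfold the frame sum $\sum_{k\in\mathbb N_0}|\langle f,T^k\varphi\rangle|^2$ on the Fourier side via Plancherel, recognize it as the squared $L^2(\mathcal O)$-norm of an integral-periodic bracket function, and then read off the condition via Cauchy-Schwarz. This is the local-field analogue of the classical shift-invariant-subspace argument, carried out on the subgroup $\{u(k):k\in\mathbb N_0\}\subset K^+$ whose character-dual, restricted to $\mathcal O$, is the orthonormal basis $\{\chi_{u(k)}\}$ of Theorem \ref{Thm2.1}.

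By Plancherel and $\widehat{T^k\varphi}(\xi)=\overline{\chi_{u(k)}(\xi)}\,\hat\varphi(\xi)$, one has
\[
\langle f,T^k\varphi\rangle \;=\; \int_K \hat f(\xi)\,\overline{\hat\varphi(\xi)}\,\chi_{u(k)}(\xi)\,d\xi.
\]
The measurable partition $K=\bigsqcup_{l\in\mathbb N_0}(\mathcal O+u(l))$ together with the integral periodicity $\chi_{u(k)}(\xi+u(l))=\chi_{u(k)}(\xi)$ (Theorem \ref{Thm2.3}) converts this into the $k$-th Fourier coefficient on $\mathcal O$ of the bracket
\[
F_f(\xi)\;:=\;\sum_{l\in\mathbb N_0}\hat f(\xi+u(l))\,\overline{\hat\varphi(\xi+u(l))}.
\]
Parseval's identity on $L^2(\mathcal O)$ with the ONB $\{\chi_{u(k)}\}$ then yields $\sum_{k\in\mathbb N_0}|\langle f,T^k\varphi\rangle|^2 = \|F_f\|_{L^2(\mathcal O)}^2$, and a pointwise Cauchy-Schwarz on $F_f$ gives $|F_f(\xi)|^2\leq \Phi(\xi)\sum_{l}|\hat f(\xi+u(l))|^2$, where $\Phi(\xi):=\sum_{k\in\mathbb N_0}|\hat\varphi(\xi+u(k))|^2$.

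To close the characterization, I specialize to $f=T^j\varphi\in V:=\overline{\mbox{span}}\{T^k\varphi:k\in\mathbb N_0\}$; a direct computation using the same identities gives $F_{T^j\varphi}(\xi)=\overline{\chi_{u(j)}(\xi)}\,\Phi(\xi)$, so the Parseval identity on $V$ evaluated at $T^j\varphi$ reduces to the scalar equality $\int_{\mathcal O}\Phi(\xi)^2\,d\xi = \int_{\mathcal O}\Phi(\xi)\,d\xi$, which, together with $\Phi\geq 0$ a.e., is equivalent to the pointwise bound $\Phi(\xi)\leq 1$ a.e.\ claimed in the statement. For general $f\in V$ the Parseval identity then propagates by density once the Cauchy-Schwarz Bessel estimate $\sum_k|\langle f,T^k\varphi\rangle|^2\leq\|f\|^2$ is in hand. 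The main technical obstacle is to justify $F_f\in L^2(\mathcal O)$ and the absolute convergence of the defining sum a.e.; I would address this by first restricting to $f$ with $\hat f$ compactly supported in $K$, where the sum is locally finite and the interchange of sum and integral is immediate, and then extending to general $f\in L^2(K)$ via the Bessel bound just obtained.
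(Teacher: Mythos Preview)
Your Fourier-side setup (bracket function, Parseval on $L^2(\mathcal O)$) matches the paper's, but the key equivalence you assert is false: the scalar equation $\int_{\mathcal O}\Phi^2\,d\xi=\int_{\mathcal O}\Phi\,d\xi$ together with $\Phi\ge0$ is \emph{not} equivalent to $\Phi\le1$ a.e. For instance, take $\Phi=2$ on a subset of $\mathcal O$ of measure $1/9$ and $\Phi=1/2$ on its complement in $\mathcal O$; both integrals equal $2/3$, yet $\Phi\not\le1$. The underlying problem is that you test the Parseval identity only at $f=T^j\varphi$, and since $|\chi_{u(j)}|\equiv1$ every $j$ yields the \emph{same} scalar constraint---far too little information to force a pointwise condition on $\Phi$. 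Your sufficiency direction is likewise incomplete: Cauchy--Schwarz delivers only the Bessel inequality $\sum_k|\langle f,T^k\varphi\rangle|^2\le\|f\|^2$, never the equality; indeed, if $\Phi\equiv 1/2$ on $\mathcal O$ then $\Phi\le1$ holds but the frame sum equals $\tfrac12\|f\|^2$ for every $f$ in the span, so Parseval fails and the bare inequality cannot be sufficient.

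The paper avoids this by representing \emph{every} $f\in V_\varphi$ as $\widehat f=r\,\widehat\varphi$ with $r$ integral-periodic in $L^2(\mathcal O,\Phi)$, and then computing both $\|f\|^2=\int_{\mathcal O}|r|^2\Phi\,d\xi$ and $\sum_k|\langle f,T^k\varphi\rangle|^2=\int_{\mathcal O}|r|^2\Phi^2\,d\xi$. Demanding equality for \emph{all} such $r$ gives $\int_{\mathcal O}|r|^2\Phi(\chi_\Omega-\Phi)\,d\xi=0$ for all $r$, hence $\Phi=\chi_\Omega$ a.e.\ where $\Omega=\mbox{supp}\,\Phi$. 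Thus the condition the proof actually establishes is $\Phi\in\{0,1\}$ a.e., which is strictly stronger than (and of course implies) the inequality $0\le\Phi\le1$ written in the statement. To repair your argument you must test on general $f$ in the span---equivalently, vary $r$ over all of $L^2(\mathcal O,\Phi)$---rather than only on the single translates $T^j\varphi$.
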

 	
 	\begin{proof}  
 		Notice that  for every $f \in \overline{\mbox{span}}\left\{\varphi(\cdot-u(k)): k \in \mathbb N_0\right\}=:V_\varphi$, we have $\widehat{f}(\xi)=r(\xi) \widehat{\varphi}(\xi),$
 		for some integral periodic function $r \in L^2\left(\mathcal O, w\right)$,  where $w(\xi)=\sum_{k\in \mathbb N_0}\left|\widehat{\varphi}\left(\xi+u(k)\right)\right|^2, $
 		and hence
 		\begin{align*}
 		\sum_{k \in \mathbb N_0} \left|<f, T^k\varphi>\right|^2=&\sum_{k \in \mathbb N_0} \left|\int_K \widehat{f}(\xi) \overline{\widehat{\varphi}(\xi)} \chi_{u(k)}(\xi) d\xi\right|^2\\
 		=&\sum_{k \in \mathbb N_0} \left|\sum_{l\in \mathbb N_0}\int_\mathcal O \widehat{f}(\xi+u(l)) \overline{\widehat{\varphi}(\xi+u(l))} \chi_{u(k)}(\xi+u(l)) d\xi\right|^2\\
 		=&\sum_{k \in \mathbb N_0} \left|\int_\mathcal O \left(\sum_{l\in \mathbb N_0} r(\xi+u(l)) | \widehat{\varphi}(\xi+u(l))|^2 \right)    \chi_{u(k)}(\xi) d\xi\right|^2,
 		\end{align*}
 		since the system $\{\mathcal O +u(k): k \in \mathbb N_0\}$ is a measurable partition of $K$, and for all  $l,\ k\in \mathbb N_{0},\ \chi_{u(k)}(u(l))=1$ in view of Theorem \ref{Thm2.3}. Further, as the function $r$ is integral periodic, we write the above expression as follows:
 		\begin{align*}
 		\sum_{k \in \mathbb N_0} \left|<f, T^k\varphi>\right|^2=& \sum_{k \in \mathbb N_0} \left|\int_{\mathcal O} r(\xi)w(\xi)\chi_{u(k)}(\xi)d\xi\right|^2 =  \int_{\mathcal O} |r(\xi)|^2|w(\xi)|^2 d\xi,
 		\end{align*}
 		because of Theorem \ref{Thm2.1}. Therefore,  we have   condition 
 		$$ 
 		\int_{\mathcal O} |r(\xi)|^2|w(\xi)| d\xi= \int_{\mathcal O} |r(\xi)|^2|w(\xi)|^2 d\xi, 
 		$$
 		since  for every $f \in V_\varphi$, we have $\|f\|^2 =  \int_{\mathcal O} |r(\xi)|^2|w(\xi)| d\xi. $
 		That means, 
 		$$  
 		\int_{\mathcal O} |r(\xi)|^2w(\xi)\left(\chi_{\Omega}(\xi)- w(\xi)\right) d\xi=0, 
 		$$
 		holds  for all  integral periodic functions $r \in L^2\left(\mathcal O, w\right)$  if and only if $w(\xi)= \chi_{\Omega}(\xi),  \ a.e. \ \xi,$ where $$\Omega=\mbox{supp}\,  w \equiv \{\xi \in K :  w (\xi)\neq 0\}.$$   Now, it is enough to show that $f \in V_\varphi$ if and only if 
 		$$
 		\widehat{f}(\xi)=r(\xi)\widehat{\varphi}(\xi),
 		$$
 		for some integral periodic function $r \in L^2\left(\mathcal O, w\right)$.
 		This follows by noting  that $V_\varphi=\overline{\mathcal A_\varphi}$, $L^2\left(\mathcal O, w\right)=\overline{\mathcal P_\varphi}$ and the operator $U: \mathcal A_\varphi \rightarrow \mathcal P_\varphi$ defined by $U(f)(\xi)=r(\xi)$ is an isometry which is onto, where  
 		$$
 		\mathcal A_\varphi=\mbox{span}\left\{T^k\varphi: k \in \mathbb N_0\right\},
 		$$
 		and  $\mathcal P_\varphi$ is the space of all integral periodic trigonometric polynomials $r$ with the $L^2\left(\mathcal O, w\right)$ norm
 		$$
 		\|r\|^2_{L^2\left(\mathcal O, w\right)}=\int_{\mathcal O} |r(\xi)|^2 w(\xi) d\xi.
 		$$
 		Here, $f \in \mathcal A_\varphi$ if and only if  for  $r\in \mathcal P_\varphi$, $\widehat{f}(\xi)=r(\xi)\widehat{\varphi}(\xi),$ where $$r(\xi)=\displaystyle\sum_{k\in \mathbb N_0} a_{k} \overline{\chi_{u(k)}(\xi)},$$ for  a finite number of non-zero  elements of $\{a_{k}\}_{k \in \mathbb N_0}$.  Now, by  {splitting} the integral into cosets of $\mathcal O$ in $K$ and using the fact of integral periodicity of  $r$, we have
 		\begin{align*}
 		\|f\|^2_2=\int_{\mathcal O} \sum_{k\in \mathbb N_0} \left|\widehat{f}\left(\xi+u(k)\right)\right|^2 d\xi
 		=\int_{\mathcal O} |r(\xi)|^2\sum_{k\in \mathbb N_0} \left|\widehat{\varphi}\left(\xi+u(k)\right)\right|^2 d\xi
 		=\|r\|^2_{L^2\left(\mathcal O, w\right)},
 		\end{align*}
 		which shows that the operator $U$ is an isometry.
 	\end{proof}

 	Following result gives a characterization of bandlimited   Parseval frame multiwavelets in $L^2(K)$:

 	\begin{theorem}\label{Thm3.3}  
 		Let $\Psi=\{\psi_m\}_{m=1}^M \subset  L^2(K)$ be such that for each $m\in \{1, 2, \cdots, M\}$, $|\widehat{\psi}_m|=\chi_{W_m},$ and   $W=\bigcup_{m=1}^M W_m$ is a disjoint union of measurable subsets of $K$.  Then  $\Psi$ is a  semi-orthogonal Parseval frame multiwavelet in $L^2(K)$ if and only if following hold:
 		\begin{enumerate}
 			\item [(i)]  \textit{$\{\mathfrak p^{j} W:  j \in \mathbb Z\}$ is a measurable partition of $K$, and}
 			\item [(ii)] \textit{for each $m\in \{1, 2, \cdots, M\}$, the set $\{W_m+u(k):k \in \mathbb N_0\}$ is a measurable partition of   a subset of $K$}.
 		\end{enumerate}
 	\end{theorem}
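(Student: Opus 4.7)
The plan is to combine Theorem~\ref{Thm3.1} with Theorem~\ref{Thm3.2}, exploiting the hypotheses $|\widehat{\psi}_m|^2 = \chi_{W_m}$ and that the $W_m$ are pairwise disjoint. Under these hypotheses, the sum in condition (3.1) of Theorem~\ref{Thm3.1} collapses to $\sum_{j \in \mathbb{Z}} \chi_{\mathfrak{p}^j W}(\xi)$, which equals $1$ a.e.\ if and only if $\{\mathfrak{p}^j W : j \in \mathbb{Z}\}$ is a measurable partition of $K$; thus (3.1) is equivalent to (i).

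For the forward direction, suppose $\Psi$ is a semi-orthogonal Parseval frame multiwavelet. Then (i) is obtained from Theorem~\ref{Thm3.1} as above. For (ii), I would first use semi-orthogonality to deduce that $\langle f, D^j T^k \psi_m\rangle = 0$ whenever $f \in W_\Psi := \overline{\mbox{span}}\{T^k \psi_m : 1 \leq m \leq M,\ k \in \mathbb{N}_0\}$ and $j \neq 0$, since $D^j T^k \psi_m \in D^j W_\Psi \perp W_\Psi$. The Parseval identity for $\Psi$ then restricts to $\|f\|^2 = \sum_{m,k} |\langle f, T^k \psi_m\rangle|^2$ for $f \in W_\Psi$, so the collection $\{T^k \psi_m : m, k\}$ is a Parseval frame for $W_\Psi$. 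Since elements of $V_{\psi_m} := \overline{\mbox{span}}\{T^k \psi_m : k \in \mathbb{N}_0\}$ have Fourier transform supported in $W_m$, the disjointness of the $W_m$ yields the orthogonal decomposition $W_\Psi = \bigoplus_{m=1}^M V_{\psi_m}$. Restricting the Parseval frame to each summand shows that $\{T^k \psi_m : k \in \mathbb{N}_0\}$ is itself a Parseval frame for $V_{\psi_m}$, and Theorem~\ref{Thm3.2} then gives $\sum_{k \in \mathbb{N}_0} \chi_{W_m}(\xi + u(k)) \in \{0,1\}$ a.e., which is (ii).

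For the converse, assume (i) and (ii). Condition (3.1) follows from (i). To verify (3.2), I would apply the identity $\mathfrak{p}^{-j} u(s) = u(s q^j)$, valid for $j \in \mathbb{N}_0$, which follows from the displayed relation $u(rq^k + s') = u(r)\mathfrak{p}^{-k} + u(s')$ recalled in Section~2 (take $r = s$, $k = j$, $s' = 0$). For any $s \in \mathbb{N}_0 \setminus q\mathbb{N}_0$ we have $s \neq 0$, hence $s q^j \neq 0$; invoking Theorem~\ref{Thm2.2}(b) to rewrite $-u(sq^j)$ as some $u(n)$ with $n \neq 0$, condition (ii) forces $W_m \cap (W_m - u(sq^j))$ to have measure zero, so each summand $\widehat{\psi}_m(\mathfrak{p}^{-j}\xi) \overline{\widehat{\psi}_m(\mathfrak{p}^{-j}\xi + u(sq^j))}$ vanishes a.e. This gives (3.2) term by term. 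Theorem~\ref{Thm3.1} then yields the Parseval frame multiwavelet property, and semi-orthogonality is automatic from (i): $\widehat{f}$ has support in $W$ for $f \in W_\Psi$, so $\widehat{D^j f}$ has support in $\mathfrak{p}^{-j} W$, and (i) forces disjointness of these supports across $j$.

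The main obstacle I foresee is the orthogonal decomposition $W_\Psi = \bigoplus_m V_{\psi_m}$ together with the restriction of the Parseval frame to each summand in the forward direction; once this is in place, the remaining steps reduce to bookkeeping with Theorems~\ref{Thm3.1} and~\ref{Thm3.2} and the coset-representative identities recalled in Section~2.
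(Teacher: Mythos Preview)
Your argument is correct. The forward direction matches the paper's: both obtain (i) from condition (3.1) and obtain (ii) by restricting the Parseval identity to the zero-dilation level via semi-orthogonality, then invoking Theorem~\ref{Thm3.2} on each $V_{\psi_m}$ (the orthogonal splitting $W_\Psi=\bigoplus_m V_{\psi_m}$, which you flag as the main obstacle, is used implicitly in the paper as well).

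The converse is where the two routes diverge. The paper does not verify (3.2) at all; instead it identifies $W_0:=\overline{\mathrm{span}}\{T^k\psi_m\}$ with $\{f:\operatorname{supp}\widehat f\subset W\}$ (this uses (ii) through Theorem~\ref{Thm3.2}), observes that $D^jW_0=\{f:\operatorname{supp}\widehat f\subset\mathfrak p^{-j}W\}$, and then reads off directly from (i) that $\bigoplus_{j}D^jW_0=L^2(K)$, which gives both the Parseval frame property and semi-orthogonality in one stroke. Your approach instead checks (3.2) term by term via the coset identity $\mathfrak p^{-j}u(s)=u(sq^j)$ and Theorem~\ref{Thm2.2}(b), then invokes Theorem~\ref{Thm3.1}, and finally verifies semi-orthogonality separately from (i). The paper's route is shorter and yields the explicit Fourier-support description of $W_0$ as a byproduct; your route is more elementary in that it avoids the identification $W_0=\{f:\operatorname{supp}\widehat f\subset W\}$ and makes transparent exactly how (ii) annihilates each summand of (3.2).
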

 	Such  set $W$ is known as \textit{Parseval frame multiwavelet set} (of order $M$) in $K$.
 	
 	\begin{proof}   
 		Let $\Psi=\{\psi_m\}_{m=1}^M\subset L^2(K)$ be such that $|\widehat{\psi}_m|=\chi_{W_m},$ where  $W=\bigcup_{m=1}^M W_m$ is a measurable subset of $K$. Then,   the condition (3.1) of Theorem \ref{Thm3.1}  yields that $   \bigcup_{j\in\mathbb Z} \mathfrak p^{j} W=K$, a.e.,  that is  equivalent to the part (i), which also gives that for $j \geq 0$, $|\frak p^{j} W_m \cap W_{m'}|=0$, for each $m, m'\in \{1, 2, \cdots, M\}$, and $m \neq m'$. Further in view of Theorem \ref{Thm3.2},  the system   $$
 		\left\{\psi_m(\cdot-u(k)): k \in \mathbb N_0\right\}, \  m\in \{1, 2, \cdots, M\}
 		$$   is a  Parseval   frame  for $\overline{\mbox{span}}\left\{\psi_m(\cdot-u(k)): k \in \mathbb N_0\right\}$    in $L^2(K)$ if and only if 
 		$$
 		\sum_{k\in \mathbb N_0}\left|\widehat{\psi}_m (\xi+u(k))\right|^2=\sum_{k\in \mathbb N_0}\chi_{W_m} (\xi+u(k))\leq 1, \ a.e. \ \xi,
 		$$       that is  equivalent to the part (ii). In this case  
 		$$ \{f \in L^2(K): \mbox{supp}\widehat{f} \subset W\}=\overline{\mbox{span}}\{\psi(\cdot-u(k)): \psi \in \Psi, k \in \mathbb N_0\}=:W_0.
 		$$ 
 		By scaling $W_0$ for any $j \in \mathbb Z$, we have
 		$$
 		D^jW_0=\overline{\mbox{span}}\{D^j\psi(\cdot-u(k)): \psi \in \Psi, k \in \mathbb N_0\}= \{f \in L^2(K): \mbox{supp}\widehat{f} \subset \frak p^{-j} W\}.
 		$$
 		Therefore, $\Psi$ is a semi-orthogonal Parseval frame multiwavelet in $L^2(K)$   if and only if   $\bigoplus_{ j \in \mathbb Z} D^jW_0=L^2(K)$ and (ii) hold, which is true  if and only if (i) and (ii) hold. 
 	\end{proof}

 	\begin{corollary} \label{Cor3.4}  
 		Let $\Psi=\{\psi_m\}_{m=1}^M   \subset  L^2(K)$ be such that for each $m\in \{1, 2, \cdots, M\}$, $|\widehat{\psi}_m|=\chi_{W_m},$ and   $W=\bigcup_{m=1}^M W_m$ is a disjoint union of measurable subsets of $K$.  Then  $\Psi$ is a    multiwavelet in $L^2(K)$ if and only if the following hold:
 		\begin{enumerate}
 			\item [(i)]  \textit{$\{\mathfrak p^{j} W:  j \in \mathbb Z\}$ is a measurable partition of $K$, and}
 			\item [(ii)] \textit{for each $m\in \{1, 2, \cdots, M\}$, the system $\{W_m+u(k):k \in \mathbb N_0\}$ is a measurable partition of     $K$}.
 		\end{enumerate}
 	\end{corollary}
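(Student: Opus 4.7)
The plan is to reduce this corollary to Theorem~\ref{Thm3.3} by supplying the single extra piece of information that upgrades a semi-orthogonal Parseval frame multiwavelet to an orthonormal multiwavelet, namely unit-norm (equivalently, the full orthonormality of each translation system $\{T^k\psi_m:k\in\mathbb{N}_0\}$). The key observation that drives the whole argument is that an orthonormal basis is nothing but a Parseval frame whose elements all have norm $1$; combined with the remark after Theorem~\ref{Thm3.1} (multiwavelet $=$ Parseval frame multiwavelet with $\|\psi_m\|=1$) this lets me treat ``multiwavelet'' as ``Parseval frame multiwavelet $+$ normalization'', and then peel off the normalization condition as a set-theoretic condition on the $W_m$.

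For necessity, I would start from the fact that any orthonormal multiwavelet $\Psi$ satisfies $D^{j}W\perp D^{j'}W$ for $j\neq j'$ (direct from orthonormality across different scales), so $\Psi$ is automatically a semi-orthogonal Parseval frame multiwavelet. Theorem~\ref{Thm3.3} then gives condition~(i) and the weaker version of~(ii). To strengthen this to a measurable partition of all of $K$, I would use that $\{T^k\psi_m:k\in\mathbb{N}_0\}$ is not merely a Parseval frame for its closed span but orthonormal. By the characterization of Jiang--Li--Jin cited just before Theorem~\ref{Thm3.2}, this is equivalent to $\sum_{k\in\mathbb{N}_0}|\widehat{\psi}_m(\xi+u(k))|^2=1$ a.e., which with $|\widehat{\psi}_m|=\chi_{W_m}$ collapses to $\sum_{k\in\mathbb{N}_0}\chi_{W_m}(\xi+u(k))=1$ a.e.\ $\xi$, i.e.\ $\{W_m+u(k):k\in\mathbb{N}_0\}$ is a measurable partition of $K$.

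For sufficiency, assuming (i) and (ii) I would invoke Theorem~\ref{Thm3.3} to conclude that $\Psi$ is a semi-orthogonal Parseval frame multiwavelet, and then check that every $\psi_{m,j,k}=D^jT^k\psi_m$ has norm $1$; since $D$ and $T$ are unitary, this reduces to showing $\|\psi_m\|=1$. By Plancherel and the hypothesis $|\widehat{\psi}_m|=\chi_{W_m}$, $\|\psi_m\|^2=|W_m|$, and tiling gives
\[
|W_m|=\int_{K}\chi_{W_m}(\xi)\,d\xi=\int_{\mathcal O}\sum_{k\in\mathbb{N}_0}\chi_{W_m}(\xi+u(k))\,d\xi=\int_{\mathcal O}1\,d\xi=1,
\]
using that $\{\mathcal O+u(k):k\in\mathbb N_0\}$ partitions $K$ (Theorem~\ref{Thm2.1}) and that (ii) states $\sum_{k}\chi_{W_m}(\xi+u(k))=1$ a.e. Hence $\mathcal A(\Psi)$ is a Parseval frame of unit vectors, and a standard argument ($\|\psi_{m,j,k}\|^4\leq\|\psi_{m,j,k}\|^2=\sum|\langle\psi_{m,j,k},\psi_{m',j',k'}\rangle|^2$ forces all cross inner products to vanish) promotes it to an orthonormal basis. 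There is no real obstacle here: the only subtlety is the correct bookkeeping between the two partition conditions, which essentially boils down to the identity $|W_m|=1$ displayed above.
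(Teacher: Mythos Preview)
Your proposal is correct and matches the paper's intended approach: the corollary is stated in the paper without proof precisely because it is meant to follow immediately from Theorem~\ref{Thm3.3} together with the remark after Theorem~\ref{Thm3.1} that a multiwavelet is exactly a Parseval frame multiwavelet with $\|\psi_m\|=1$. Your reduction of the extra normalization condition $\|\psi_m\|^2=|W_m|=1$ to the strengthening of ``partition of a subset of $K$'' to ``partition of $K$'' is exactly the missing step the paper leaves to the reader.
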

 	Such set $W$ is known as \textit{multiwavelet set} (of order $M$) in $K$.
 	
 	The most elegant method to construct    multiwavelets is based on       multiresolution analysis (MRA) which is a family of closed subspaces of a Hilbert space satisfying certain properties.  By an  \textit{MRA}, we mean that   a sequence of closed subspaces $\left\{V_j\right\}_{j\in \mathbb Z}$ of $L^2(K)$ satisfying the following properties:  for all $j\in \mathbb Z$,
 	\begin{enumerate}
 		\item [(i)] $V_j\subset V_{j+1}$,\;\;    $D V_j =V_{j+1}$, \;\; $\overline{\bigcup_{j\in \mathbb Z}V_j} =L^2(K)$,\;\;
 		$\bigcap_{j\in \mathbb Z}V_j=\left\{0\right\}$, and
 		
 		\item [(ii)] there is a  $\varphi\in V_0$ (known as, \textit{scaling function})  such that $\{ \varphi(\cdot-u(k))\}_{k\in \mathbb N_{0}}$ forms an   orthonormal basis    for $V_{0}$.
 	\end{enumerate}
 	If we replace the term \lq\lq orthonormal basis \rq \rq by \lq \lq Parseval frame \rq \rq in the last axiom, then above is known as \textit{Parseval frame MRA}.

 	Now, we provide an example of multiwavelet set associated with an MRA with the help of   ring of integers:
 	
 	\begin{example}
 		[\textbf{Shannon type  Multiwavelet}]\label{Exa3.5}  Let us consider the ring of integers $\mathcal O$ in $K$. Then, $\mathcal O$ is an additive subgroup  of $\frak P^{-1}$,  and hence the system 
 		$$
 		\left\{\mathcal O+u(0), \mathcal O+u(1), \cdots, \mathcal O+u(q-1)\right\}
 		$$
 		is a measurable partition of $\frak P^{-1}$, where  the set $\left\{u(n)\right\}_{n=0}^{q-1}$ is  a complete set of distinct coset representatives  of $\mathcal O$ in $\frak P^{-1}$ with $u(0)=0$, and $|u(n)|=q$, for $0< n <q$. Thus the system  
 		$$  
 		\{\mathcal O+u(1), \cdots, \mathcal O+u(q-1)\} 
 		$$ is a measurable partition of the set $\frak P^{-1} \backslash \mathcal O=\frak p^{-1} \mathcal O^*$.

 		Now, we  consider the set $W_i$  defined by $ W_i=\mathcal O+u(i)$, for $1 \leq i \leq q-1$.  
 		Then,  we have the following properties of $W_i$: 
 		\begin{enumerate}
 			\item [(i)]  For each $1 \leq i \leq q-1$, $|W_i|=|\mathcal O|=1.$
 			
 			\item [(ii)]  For each $1 \leq i \leq q-1$ and $\xi \in W_i$, we have $\xi=x+u(i)$, for some $x \in \mathcal O$, and hence, $|\xi|=|x+u(i)|=max\{|x|, |u(i)|\}=q$, as $|x|\leq 1$ and $|u(i)|=q$.  
 			
 			\item [(iii)]  For each $i, j\in\{1, 2, \cdots, q-1\}$ and $i \neq j$, we have $|W_i \cap W_j|=0$.
 			
 			\item [(iv)] For each $1 \leq i \leq q-1$, the system $\{W_i+u(k): k \in \mathbb N_0\}$ is  a measurable partition of $K$ since the system $\{\mathcal O+ u(k): k \in \mathbb N_0\}$ is a   measurable partition of $K$, and for all $l, m \in \mathbb N_0$, $u(l)+u(m)=u(n)$, for some $n \in \mathbb N_0$ in view of Theorem \ref{Thm2.2}(c). 
 			
 			\item [(v)] The system $\{\frak p^{-j} W_i: j \in \mathbb Z, 1 \leq i \leq q-1\}$ is a measurable partition of $K$ since $\displaystyle \bigcup_{i=1}^{q-1} W_i=\frak p^{-1} \mathcal O^*$, $\displaystyle \bigcup_{j \in \mathbb Z} \frak p^{-j} \mathcal O=K,$     $\mathcal O \subset \frak P^{-1},$ and $\frak P^{-1} \backslash \mathcal O=\frak p^{-1} \mathcal O^*$. 
 		\end{enumerate}
 		Therefore,   $W= \bigcup_{i=1}^{q-1} W_i$ is a multiwavelet set of order $(q-1)$  in view of Corollary \ref{Cor3.4}. 
 		
 		Next, we consider a space $V_0$ defined by 
 		$$
 		V_0=\overline{\mbox{span}}\{\varphi(\cdot-u(k)): k \in \mathbb N_0, |\widehat{\varphi}|=\chi_{S}\},  
 		$$ 
 		where the associated scaling set $S=\mathcal O$. Then, the sequence $\{D^j V_0\}_{j \in \mathbb Z}$  is an   MRA by noting the properties of its associated scaling set (see, \cite{SMM2017}).  Here note that the scaling set $S$ has the following properties:   $S=\bigcup_{j \in \mathbb N} \frak p^{j} W$, the system $\{S+u(k): k \in \mathbb N_0\}$ is a measurable partition of $K$,  the multiwavelet set $W=\frak p^{-1} S\backslash S$,  and 
 		$$
 		|S|=\sum_{j \in \mathbb N} |\frak p^{j} W|=\sum_{j \in \mathbb N}\frac{|W|}{q^j}=\frac{|W|}{q-1}=\frac{q-1}{q-1}=1, \ \mbox{as} \ q >2.$$  
 	\end{example}

 	Next, we provide examples of Parseval frame wavelet  and multiwavelet set  for $L^2(K)$ and show that they are associated with Parseval frame MRA.  
 	
 	\begin{example} \label{Exa3.6}
 		Let $m \in \mathbb N$. Then,  the set $\frak p^{m} \mathcal O^*=\frak P^{m}   \backslash \frak P^{m+1}$  has the  following properties:
 		\begin{enumerate}
 			\item [(i)]   The system $\{\frak p^{j} (\frak p^{m} \mathcal O^*): j \in \mathbb Z\}$ is a measurable partition of $K$ since 
 			$$
 			\displaystyle \bigcup_{j \in \mathbb Z} \frak p^{-j} \mathcal O=K, \ \mbox{and} \    \mathcal O \subset \frak P^{-1}.
 			$$
 			
 			\item [(ii)]  The system 
 			$$
 			\{ \frak p^{m} \mathcal O^*+ u(k): k \in \mathbb N_0\}
 			$$ 
 			is a measurable partition of a measurable subset of $K$ since 
 			$$
 			\{\mathcal O+ u(k): k \in \mathbb N_0\}
 			$$
 			is a   measurable partition of $K$ and $\frak p^{m} \mathcal O^* \subset \frak P^m \subset  \mathcal O.$
 		\end{enumerate}
 		Therefore,  for each $m \in \mathbb N$,  the set $\frak p^{m} \mathcal O^*$ is  a Parseval  frame wavelet in  $L^2(K)$ in view of Theorem 3.3.   Next, consider a space $V_0$ defined by 
 		$$
 		V_0=\overline{\mbox{span}}\{\varphi(\cdot-u(k)): k \in \mathbb N_0, |\widehat{\varphi}|=\chi_{\frak P^{m+1}}\}. 
 		$$ 
 		Then, the sequence $\{D^j V_0\}_{j \in \mathbb Z}$ is a Parseval frame MRA by noting the properties of its associated scaling set (see, \cite{SMM2017}).  Here, the associated scaling set  is  $\frak P^{m+1}  =\bigcup_{j \in \mathbb N} \frak p^{j} (\frak p^{m} \mathcal O^*)$, and its associated   Parseval  frame wavelet set is $\frak p^{m} \mathcal O^*=\frak P^{m}  \backslash \frak P^{m+1}$. Further, note that the measure of scaling set is     $
 		|\frak P^{m+1}|=\frac{1}{q^{m+1}},$ and the system 
 		$$
 		\{\frak P^{m+1} +u(k): k \in \mathbb N_0\}
 		$$ 
 		is a measurable partition of a subset of $K$ since $\frak P^{m+1} \subset \mathcal O$, and the system $\{\mathcal O+ u(k): k \in \mathbb N_0\}$ is a   measurable partition of $K$.   
 	\end{example}
 	
 	\begin{example} \label{Exa3.7}
 		Let $m \in \mathbb N$ and consider the Example \ref{Exa3.5}. Then,  the set $ \frak p^m W=\bigcup_{i=1}^{q-1} \frak p^m W_i$ is a Parseval frame  multiwavelet of order $q-1$ in $L^2(K)$, where for each $1 \leq i \leq q-1$, the set 
 		$$
 		\frak p^m W_i=\frak P^m +\frak p^m u(i).
 		$$
 		This follows by noting that  
 		\begin{enumerate}
 			\item [(i)]  the system $\{\frak p^m W_i: 1 \leq i \leq q-1\}$ is a measurable partition of $\frak p^{m-1} \mathcal O^*$ since 
 			the system 
 			$$
 			\{W_i: 1 \leq i \leq q-1\}
 			$$
 			is a measurable partition of the set $ \frak p^{-1} \mathcal O^*$, and 
 			$$
 			|\frak p^m W_i \cap \frak p^m W_j|=q^{-m} |W_i \cap W_j|, \ \mbox{for} \  i, j \in \{1, 2, \cdots, q-1\}, 
 			$$ 
 			\item [(ii)]   the system $\{\frak p^{j} (\frak p^m W): j \in \mathbb Z\}$ is a measurable partition of $K$ since the system $\{\frak p^{j} W: j \in \mathbb Z\}$ is a measurable partition of $K$, 
 			\item [(iii)]  for each $1 \leq i \leq q-1$, the system 
 			$$
 			\{ \frak p^{m} W_i+ u(k): k \in \mathbb N_0\}
 			$$
 			is a measurable partition of a measurable subset of $K$ since $|\frak p^{m} W_i|=\frac{1}{q^m}<1$, and for $k, k' \in \mathbb N_0,  (k \neq k')$, we have  
 			\begin{align*}
 			|(\frak p^{m} W_i+u(k)) \cap (\frak p^{m} W_i+u(k'))|=&q^m |(W_i+\frak p^{-m} u(k)) \cap (W_i+\frak p^{-m} u(k'))|\\
 			=& q^m |(W_i+ u(q^m k)) \cap (W_i+ u(q^m k'))|\\
 			=&0,
 			\end{align*}
 			as the system $\{W_i+ u(k): k \in \mathbb N_0\}$ is a   measurable partition of $K$.
 		\end{enumerate}
 		Next, consider a space $V_0$ defined by 
 		$$
 		V_0=\overline{\mbox{span}}\{\varphi(\cdot-u(k)): k \in \mathbb N_0, |\widehat{\varphi}|=\chi_{S}\}, 
 		$$ 
 		where the associated scaling set is $S=\bigcup_{j \in \mathbb N}\frak p^{j} (\frak p^{m}W)$. Then, the sequence $\{D^j V_0\}_{j \in \mathbb Z}$  is a Parseval frame MRA by noting the properties of its associated scaling set (see, \cite{SMM2017}).  Here note that the scaling set $S$ has the following properties:   $\frak p^{-1} S\backslash S=\frak p^{m}W$,    $|S|=\frac{1}{q^m}$  and   $\{S +u(k)\}_{k \in \mathbb N_0}$ is a measurable partition of a subset of $K$ since $S \subset \bigcup_{j \in \mathbb N}\frak p^{j} W =\mathcal O$. 	
 		
 	\end{example}
 	\section{Super-wavelet of length $n$ for local fields}

 	Balan in \cite{B1999}, and Han and Larson in   \cite{HL2005}   introduced the notion of super-wavelets that have applications  in many areas  including  signal processing, data compression and image analysis.  The following   definition  of super-wavelets for local fields is  an analogue of Euclidean case:
 	
 	\begin{defn}\label{Def4.1}
 		Suppose that $\Theta=(\eta_1, \eta_2, ...,  \eta_n)$, where for each $i \in \{1, 2, \cdots, n\}$, $\eta_i$ is a Parseval frame wavelet  for $L^2(K)$. We
 		call the $n$-tuple $\Theta$ a \textit{super-wavelet of length} $n$ if
 		$$
 		\mathfrak B (\Theta):=\left\{\bigoplus_{i=1}^n D^jT^k \eta_i\equiv D^jT^k \eta_1 \oplus...\oplus D^jT^k \eta_n: j\in \mathbb Z, k \in \mathbb N_{0}\right\}
 		$$
 		is an orthonormal basis for $L^2(K) \oplus ... \oplus L^2(K)$ (\textit{say}, $\displaystyle \bigoplus_n L^2(K)$). Each $\eta_i$ here is called a \textit{component} of the super-wavelet. In the case when $\mathfrak B (\Theta)$ is a Parseval frame for $\displaystyle \bigoplus_n L^2(K)$, the $n$-tuple $\Theta$ is called a \textit{Parseval frame  super-wavelet.}
 	\end{defn}

 	The result given below is a characterization of a super-wavelet of length $n$ in case of local fields of positive characteristic.
 	
 	\begin{theorem}\label{Thm4.2}
 		Let $\eta_1,  ...,  \eta_n \in L^2(K).$  Then $(\eta_1, ..., \eta_n)$  is a super-wavelet of length $n$ if and only if the following equations hold:
 		\begin{itemize}
 			\item [(i)]\textit{$\sum_{j\in \mathbb Z} |\widehat{\eta}_i(\mathfrak p^j \xi)|^2=1,$ \ \ for a.e. $\xi \in K, \ i=1, ..., n,$}
 			\item [(ii)] $\sum_{j=0}^{\infty} \widehat{\eta}_i(\mathfrak p^{-j} \xi) \overline{\widehat{\eta}_i(\mathfrak p^{-j} (\xi+u(s))}=0,$ \ \ for a.e. $\xi \in K,  \ s \in  \mathbb N_0\backslash q\mathbb N_0, i=1, ..., n,$ \ and
 			\item [(iii)] $\sum_{k\in \mathbb N_0} \sum_{i=1}^n\widehat{\eta}_i(\mathfrak p^{-j} (\xi+u(k))) \overline{\widehat{\eta}_i(\xi+u(k))}=\delta_{j, 0},$ \ \ for a.e. $\xi \in K, j \in  \mathbb N_0.$
 		\end{itemize}
 	\end{theorem}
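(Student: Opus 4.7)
The plan is to establish both directions of the characterization by reducing the per-component analysis to the Parseval frame wavelet characterization (Theorem~\ref{Thm3.1}), while the super-wavelet structure is captured separately through (iii) via orthogonality relations in $\bigoplus_n L^2(K)$. The technical tools are Plancherel on $L^2(K)$, periodization over the cosets $\{\mathcal O+u(l)\}_{l\in\mathbb N_0}$, the identity $\chi_{u(k)}(u(l))=1$ (Theorem~\ref{Thm2.3}), the orthonormality of characters on $\mathcal O$ (Theorem~\ref{Thm2.1}), and the closure of $\{u(n):n\in\mathbb N_0\}$ under addition (Theorem~\ref{Thm2.2}(c)).

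For the necessity direction, I assume $\mathfrak B(\Theta)$ is an orthonormal basis of $\bigoplus_n L^2(K)$. Evaluating the Parseval identity at $F=(0,\ldots,f,\ldots,0)$ with $f\in L^2(K)$ in the $i$-th slot shows that $\{D^jT^k\eta_i\}$ is a Parseval frame for $L^2(K)$, so by Theorem~\ref{Thm3.1} the conditions (i) and (ii) follow. To extract (iii), I use the orthogonality $\bigl\langle\bigoplus_iD^jT^k\eta_i,\bigoplus_i\eta_i\bigr\rangle=\delta_{j,0}\delta_{k,0}$ for $j,k\in\mathbb N_0$. The left-hand side equals $\sum_i\langle D^jT^k\eta_i,\eta_i\rangle$; applying Plancherel and changing variable $\zeta=\mathfrak p^j\xi$ converts it into $q^{j/2}\int_K\chi_{-u(k)}(\zeta)\overline{b_j(\zeta)}\,d\zeta$, where $b_j(\zeta):=\sum_i\widehat\eta_i(\mathfrak p^{-j}\zeta)\overline{\widehat\eta_i(\zeta)}$. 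Periodizing the integral over $K$ by means of Theorem~\ref{Thm2.3} rewrites it as an integral over $\mathcal O$ against the integral-periodic function $(Pb_j)(\xi):=\sum_k b_j(\xi+u(k))$; Theorem~\ref{Thm2.1} then forces $(Pb_j)(\xi)=\delta_{j,0}$ a.e., which is precisely (iii).

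For the sufficiency direction, I assume (i), (ii), and (iii). Theorem~\ref{Thm3.1} gives that each $\eta_i$ is a Parseval frame wavelet. Integrating (iii) at $j=0$ over $\mathcal O$ yields $\sum_i\|\eta_i\|^2=1$, so every element of $\mathfrak B(\Theta)$ has unit norm. Because a normalized Parseval frame is automatically an orthonormal basis, it suffices to establish the Parseval identity $\sum_{j,k}\bigl|\sum_i\langle f_i,D^jT^k\eta_i\rangle\bigr|^2=\sum_i\|f_i\|^2$ for all $F=(f_1,\ldots,f_n)\in\bigoplus_nL^2(K)$. Expanding the square, summing over $k$ via Parseval on $\mathcal O$, periodizing the resulting integrals, and re-indexing pairs $(l,l')$ of cosets through the subgroup relation $u(l')=u(l)+u(\sigma)$ (Theorem~\ref{Thm2.2}(c)) produces a sum over $(j,\sigma,i,i')$ of integrals against $b_j$ translated by $u(\sigma)$. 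The $\sigma=0$ diagonal contribution reduces to $\sum_i\|f_i\|^2$ via condition (i), while all remaining terms must be shown to vanish using (ii) and (iii).

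The principal obstacle is this final cancellation. Condition (iii) records only the diagonal-in-$i$ combination $\sum_i\widehat\eta_i(\mathfrak p^{-j}\cdot)\overline{\widehat\eta_i(\cdot)}$ rather than the off-diagonal combinations $\widehat\eta_i\overline{\widehat\eta_{i'}}$ for $i\neq i'$ that a priori occur in the expansion. The cancellation is therefore orchestrated indirectly: after periodization and re-indexing, one reorganizes the sum over $i$ so that it reassembles into $b_j$, at which point (iii) annihilates the $j\neq 0$ contributions and (iii) at $j=0$ fixes the normalization. Carefully tracking the interaction between the dilation $\mathfrak p^{-j}$ and the lattice shift $u(\sigma)$, and ensuring that the resulting expressions can be massaged into a form where (iii) applies, is the most technical step of the argument.
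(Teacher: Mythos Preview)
Your necessity argument matches the paper's and is fine.

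The sufficiency sketch has a gap at precisely the point you flag. After expanding $\bigl|\sum_i\langle f_i,D^jT^k\eta_i\rangle\bigr|^2$ and periodizing, the cross terms with $i\neq i'$ involve products $\overline{\widehat\eta_i(\cdot)}\,\widehat\eta_{i'}(\cdot)$, which are genuinely off-diagonal in $i$. Your function $b_j(\zeta)=\sum_i\widehat\eta_i(\mathfrak p^{-j}\zeta)\overline{\widehat\eta_i(\zeta)}$ is purely diagonal in $i$, so no re-indexing over the translation lattice will ``reassemble'' the $i\neq i'$ pieces into $b_j$; conditions (i)--(iii) simply contain no off-diagonal information of that type. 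The diagonal part $i=i'$ does yield $\sum_i\|f_i\|^2$ via the individual Parseval frame property, but nothing in your outline forces the remaining real part to vanish.

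The paper circumvents this. First, the very computation you set up for (iii) shows, when run backwards, that (iii) gives \emph{full} orthonormality of $\mathfrak B(\Theta)$, not merely unit norm at $j=0$: indeed $\langle\bigoplus_iD^jT^k\eta_i,\bigoplus_i\eta_i\rangle=q^{j/2}\overline{\int_{\mathcal O}(Pb_j)(\xi)\chi_{u(k)}(\xi)\,d\xi}$, and $Pb_j\equiv\delta_{j,0}$ makes this $\delta_{j,0}\delta_{k,0}$ for all $j\ge 0$, $k\in\mathbb N_0$. With orthonormality in hand, the paper does \emph{not} attempt the Parseval identity directly; instead it proves density. Looking at the $l'$-th slot of the trivial identity $\bigoplus_i D^jT^k\eta_i=\sum_{j',k'}\bigl(\sum_i\langle D^jT^k\eta_i,D^{j'}T^{k'}\eta_i\rangle\bigr)\bigoplus_{i'}D^{j'}T^{k'}\eta_{i'}$ and subtracting the Parseval frame reconstruction $D^jT^k\eta_{l'}=\sum_{j',k'}\langle D^jT^k\eta_{l'},D^{j'}T^{k'}\eta_{l'}\rangle D^{j'}T^{k'}\eta_{l'}$ yields exactly the cross-term cancellation needed to place every single-slot vector $(0,\ldots,D^jT^k\eta_m,\ldots,0)$ in the closed span of $\mathfrak B(\Theta)$. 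You should either adopt this density route, or, if you insist on the Parseval-identity route, first extract full orthonormality from (iii) and then use Bessel together with the slot-wise Parseval equality to pin down the cross terms.
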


 	\begin{proof}
 		Suppose $(\eta_1, ..., \eta_n)$  is a {super-wavelet of length} $n$. Then, the system $\mathfrak B(\Theta)$ is an orthonormal basis for   $\displaystyle \bigoplus_n L^2(K)$. Therefore for each $1 \leq i \leq n,$ the function $\eta_i$ is a Parseval  frame wavelet   for $L^2(K)$, and hence the conditions (i) and (ii) follow  from equations $(3.1)$ and $(3.2)$. Now, condition  (iii)   follows from following descriptions: 
 		
 		Using the properties of $\{u(k): k \in \mathbb N_0\}$, the expression 
 		$$
 		< \bigoplus_{i=1}^n D^j{T}^{l}\eta_{i},  \bigoplus_{i=1}^n D^{j'} {T}^{l'}\eta_{i} >=\delta_{l, l'} \delta_{j, j'}, \ \mbox{for}\ l, l' \in \mathbb N_0; j, j' \in \mathbb Z,
 		$$
 		is equivalent to 
 		$$
 		< \bigoplus_{i=1}^n D^j{T}^{l}\eta_{i},  \bigoplus_{i=1}^n \eta_{i} >=\delta_{l, 0} \delta_{j, 0}, \ \mbox{for}\ l  \in \mathbb N_0; j \geq 0.
 		$$
 		Now, let  $j\geq 0$ and $k\in \mathbb N_{0}$. Since for each $m, k \in \mathbb N_0$, $\chi_{u(k)}(u(m))=1$,  and the system $\{\mathcal O +u(k): k \in \mathbb N_0\}$ is a measurable partition of $K$, we  have 
 		\begin{align*}
 		<\bigoplus_{i=1}^n D^j{T}^{k}\eta_{i},  \bigoplus_{i=1}^n \eta_{i}> =& \sum_{i=1}^n <D^j{T}^{k}\eta_{i}, \eta_{i}>
 		= \sum_{i=1}^n <\widehat{D^j{T}^{k}\eta}_{i}, \widehat{\eta}_{i}>, 
 		\end{align*}
 		and hence, we obtain
 		\begin{align*}
 		<\bigoplus_{i=1}^n D^j{T}^{k}\eta_{i},  \bigoplus_{i=1}^n \eta_{i}> =& \sum_{i=1}^n \int_K \widehat{D^j{T}^{k}\eta}_{i}(\xi) \overline{\widehat{\eta}_{i}(\xi)} d\xi\\
 		=&q^{-j/2}\sum_{i=1}^n \int_K \chi_{u(k)}(-\mathfrak p^j\xi)\widehat{\eta}_{i}(\mathfrak p^j\xi) \overline{\widehat{\eta}_{i}(\xi)} d\xi\\
 		=&q^{j/2}  \sum_{i=1}^n \int_{\bigcup_{m \in \mathbb N_0} \mathcal O+u(m)} \chi_{u(k)}(-\xi)\widehat{\eta}_{i}(\xi) \overline{\widehat{\eta}_{i}(\mathfrak p^{-j}\xi)} d\xi\\
 		=& {q^{j/2}   \int_{\mathcal O} \left(\sum_{i=1}^n \sum_{m \in \mathbb N_0}\widehat{\eta}_{i}(\xi+u(m)) \overline{\widehat{\eta}_{i}(\mathfrak p^{-j}(\xi+u(m))}\right) \overline{\chi_{u(k)}(\xi)} d\xi} \\
 		=&q^{j/2}  \overline{ \int_{\mathcal O} \left(\sum_{i=1}^n \sum_{m \in \mathbb N_0}\widehat{\eta}_{i}(\mathfrak p^{-j}(\xi+u(m)) \overline{\widehat{\eta}_{i}(\xi+u(m))} \right)  \chi_{u(k)}(\xi)  d\xi}.
 		\end{align*} 
 		Therefore, the result follows by comparing the above expression together with the Fourier coefficient and Fourier series of a function in $L^1 (\mathcal O)$, and  noting that      the system $\{\chi_{u(k)}\}_{k \in \mathbb N_0}$ is an orthonormal basis for $L^2(\mathcal O)$.

 		Conversely, suppose that conditions (i)-(iii) hold. Then by noting above discussion, to complete the proof it remains only to show that the system $\mathfrak B(\Theta)$ is dense in $\displaystyle \bigoplus_n L^2(K)$. The result  follows by writing the following for every $m  \in \{1, 2, \cdots, n\}$,  
 		\begin{align*}
 		\bigoplus_{i=1}^n \left(\delta_{i, m} \times g_m\right)= \sum_{(j', k') \in \mathbb Z \times \mathbb N_0} <\bigoplus_{i=1}^n \left(\delta_{i, m} \times g_m\right),   D^{j'}T^{k'} \eta_{m}>   D^{j'}T^{k'} \eta_{m}
 		\end{align*}
 		where $g_m=    D^jT^k \eta_m$. This fact is true in view of the following:  for $l=1, 2, \cdots, n$, $j \in \mathbb Z$ and $k \in \mathbb N_0$, we can write
 		\begin{align*}
 		\bigoplus_{i=1}^n D^jT^k \eta_i=&\sum_{(j', k') \in \mathbb Z \times \mathbb N_0} <\bigoplus_{i=1}^n D^jT^k \eta_i, \bigoplus_{i'=1}^n D^{j'}T^{k'} \eta_{i'}> \bigoplus_{i'=1}^n D^{j'}T^{k'} \eta_{i'}\\
 		=&\sum_{(j', k') \in \mathbb Z \times \mathbb N_0} \sum_{i=1}^n <D^jT^k \eta_i,  D^{j'}T^{k'} \eta_i> \bigoplus_{i'=1}^n D^{j'}T^{k'} \eta_{i'},
 		\end{align*}
 		and $\displaystyle D^jT^k \eta_l=\sum_{(j', k') \in \mathbb Z \times \mathbb N_0} <D^jT^k \eta_l,   D^{j'}T^{k'} \eta_{l}>   D^{j'}T^{k'} \eta_{l}$, and hence we have 
 		\begin{align*}
 		\sum_{(j', k') \in \mathbb Z \times \mathbb N_0} <D^jT^k \eta_l,   D^{j'}T^{k'} \eta_{l}>   D^{j'}T^{k'} \eta_{l'}=0
 		\end{align*}
 		for $l \neq l'$ and $l, l' \in \{1, 2, \cdots, n\}$.  
 	\end{proof}

 	The following is an easy consequence of above theorem:

 	\begin{theorem}\label{Thm4.3} 
 		Let $\eta_1,  ...,  \eta_n \in L^2(K)$ be such that $|\eta_i|=\chi_{_{W_i}},$ for $i \in \{1, 2, \cdots, n\}.$  Then $(\eta_1, ..., \eta_n)$  is a {super-wavelet of length} $n$ if and only if the following equations hold:
 		\begin{itemize}
 			\item [(a)]  for each  $i \in \{1, 2, \cdots, n\},$ the system  $\{ \mathfrak p^j W_i: j \in \mathbb Z\}$ is a measurable partition of $K$,
 			\item [(b)]  for each  $i \in \{1, 2, \cdots, n\},$ the system  $\{   W_i+u(k): k \in \mathbb N_0\}$ is a measurable partition of a subset of $K$,
 			\item [(c)]  the system $\{W_i+u(k): k \in \mathbb N_0, 1 \leq i \leq n\}$	is a measurable partition of $K$. 
 		\end{itemize}
 	\end{theorem}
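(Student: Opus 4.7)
The strategy is to invoke Theorem~\ref{Thm4.2} and translate each of its three conditions into set-theoretic statements about the $W_i$ using the assumption $|\widehat{\eta}_i|=\chi_{W_i}$. Writing $\widehat{\eta}_i=\alpha_i\chi_{W_i}$ for a measurable unimodular phase $\alpha_i$ supported on $W_i$, every product $\widehat{\eta}_i(\zeta)\overline{\widehat{\eta}_i(\zeta')}$ appearing in Theorem~\ref{Thm4.2} has modulus $\chi_{W_i}(\zeta)\,\chi_{W_i}(\zeta')$, so each of the three identities collapses to a statement about overlaps of translates and $\mathfrak{p}$-dilates of $W_i$.

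For the necessity direction, condition~(i) of Theorem~\ref{Thm4.2} immediately reads $\sum_{j\in\mathbb{Z}}\chi_{\mathfrak{p}^{-j}W_i}(\xi)=1$ a.e., which is (a). Specializing condition~(iii) to $j=0$ gives $\sum_{i,k}\chi_{W_i-u(k)}(\xi)=1$ a.e.; combined with Theorem~\ref{Thm2.2}(b), which identifies $\{-u(k)\}$ with $\{u(k)\}$, this is (c). Finally, (b) follows from (c) for free: since the collection $\{W_i+u(k):i,k\}$ is a disjoint partition of $K$, for each fixed $i$ the subcollection $\{W_i+u(k):k\in\mathbb{N}_0\}$ is pairwise disjoint, hence a measurable partition of its union, a subset of $K$.

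For sufficiency, condition~(i) of Theorem~\ref{Thm4.2} and the $j=0$ case of condition~(iii) are read off directly from (a) and (c); the real work is verifying the remaining instances. For (iii) with $j\geq 1$, the $(i,k)$-summand has modulus $\chi_{W_i}(\xi+u(k))\,\chi_{W_i}(\mathfrak{p}^{-j}(\xi+u(k)))$, which forces $\xi+u(k)\in W_i\cap \mathfrak{p}^{j}W_i$; by (a) this intersection has measure zero, so the sum vanishes. For (ii), the $j$-summand has modulus $\chi_{W_i}(\mathfrak{p}^{-j}\xi)\,\chi_{W_i}(\mathfrak{p}^{-j}\xi+\mathfrak{p}^{-j}u(s))$, and I would apply the identity $u(rq^{k}+s)=u(r)\mathfrak{p}^{-k}+u(s)$ recorded in Section~2 (with $r=s$, $k=j$, and trailing argument $0$) to obtain the key arithmetic relation $\mathfrak{p}^{-j}u(s)=u(sq^{j})$. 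Since $s\in\mathbb{N}_0\setminus q\mathbb{N}_0$ forces $sq^{j}\geq 1$, condition~(b) makes $W_i\cap(W_i-u(sq^{j}))$ null, killing every summand.

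The only genuine obstacle is the arithmetic identification $\mathfrak{p}^{-j}u(s)=u(sq^{j})$: this is the single place where the specific ordering of the coset representatives $\{u(n)\}$ from Section~2 enters essentially, as it is precisely what allows the cross-term condition of Theorem~\ref{Thm4.2}(ii) to be matched against the translation-disjointness supplied by (b). Everything else is purely formal bookkeeping with characteristic functions.
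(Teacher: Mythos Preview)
Your proof is correct and proceeds, like the paper's, by reducing everything to the three conditions of Theorem~\ref{Thm4.2}. The organization differs in two places worth noting.

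First, for the necessity direction you derive (b) as an immediate consequence of (c), whereas the paper obtains (a) and (b) simultaneously by observing that each $\eta_i$ is a Parseval frame wavelet and invoking Theorem~\ref{Thm3.3} with $M=1$. Your route is slightly more economical here.

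Second, and more substantively, for the sufficiency direction the paper never confronts condition~(ii) of Theorem~\ref{Thm4.2} head-on: it again appeals to Theorem~\ref{Thm3.3} to conclude from (a) and (b) that each $\eta_i$ is a Parseval frame wavelet, which (via Theorem~\ref{Thm3.1}) packages conditions~(i) and~(ii) together. This bypasses entirely the arithmetic identity $\mathfrak{p}^{-j}u(s)=u(sq^{j})$ that you flag as the ``only genuine obstacle.'' Your direct verification of~(ii) using that identity is perfectly valid and makes the argument self-contained through Theorem~\ref{Thm4.2} alone, but the paper's detour through Theorem~\ref{Thm3.3} shows that this obstacle can be avoided: the translation-disjointness in~(b) already suffices, via the subspace decomposition in the proof of Theorem~\ref{Thm3.3}, without ever needing to identify $\mathfrak{p}^{-j}u(s)$ as a specific $u(m)$. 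Both approaches verify condition~(iii) by the same computation.
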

 	
 	\begin{proof}
 		Suppose $(\eta_1, ..., \eta_n)$  is a super-wavelet of length $n$ such that  $|\eta_i|=\chi_{_{W_i}},$ for $i \in \{1, 2, \cdots, n\}.$ Then, for each $i \in \{1, 2, \cdots, n\}$, the function $\eta_i$ is a Parseval frame wavelet in $L^2(K)$ and the system $\mathcal B(\Theta)$ is an orthonormal basis for $\displaystyle \bigoplus_n L^2(K)$. Hence  the conditions (a) and (b) hold   in view of Parseval frame wavelet $\eta_i$ and Theorem 3.3, and also, the condition (iii) of Theorem 4.2 is satisfied, that means, for $ j \in \mathbb N_0$ 
 		\begin{align*}
 		\hspace{2cm} \delta_{j, 0}=&\sum_{k\in \mathbb N_0} \sum_{i=1}^n\widehat{\eta}_i(\mathfrak p^{-j} (\xi+u(k))) \overline{\widehat{\eta}_i(\xi+u(k))}\\
 		=&\sum_{k\in \mathbb N_0} \sum_{i=1}^n\chi_{W_i}(\mathfrak p^{-j} (\xi+u(k))) \chi_{W_i}(\xi+u(k))\\ 
 		=& \sum_{k\in \mathbb N_0} \sum_{i=1}^n\chi_{\left(\mathfrak p^{j} W_i+ u(k)\right) \cap \left( W_i+ u(k)\right)}(\xi),
 		\end{align*}
 		which is true for $j \neq 0$ since 
 		$$
 		|\left(\mathfrak p^{j} W_i+ u(k)\right) \cap \left( W_i+ u(k)\right)|=0,
 		$$ 
 		in view of conditions (a) and (b). Now, let $j=0$. Then, the expression   $$
 		\sum_{k\in \mathbb N_0} \sum_{i=1}^n\chi_{  \left( W_i+ u(k)\right)}(\xi)=1
 		$$   implies that    
 		$$
 		|\left(W_l+ u(k)\right) \cap \left( W_{l'}+ u(k')\right)|=0
 		$$
 		for $k, k' \in \mathbb N_0$; $l, l' \in \{1, 2, \cdots, n\}$ and $(l, k) \neq (l', k')$. Also, we have 
 		\begin{align*}
 		\hspace{2cm}	1=|\mathcal O|=&\int_{\mathcal O} d\xi= \int_{\mathcal O} \sum_{k\in \mathbb N_0} \sum_{i=1}^n\chi_{  \left( W_i+ u(k)\right)}(\xi) d\xi\\
 		=&   \int_K \chi_{\cup_{i=1}^n W_i} (\xi) d\xi=|\cup_{i=1}^n W_i|, 
 		\end{align*}
 		which proves  condition (c). 
 		
 		Conversely, let us assume that for each $i \in \{1, 2, \cdots, n\}$,  the function $\eta_i$ satisfies the conditions (a), (b) and (c), where   $|\eta_i|=\chi_{_{W_i}}.$  Then, $(\eta_1, ..., \eta_n)$  is a super-wavelet of length $n$. This follows by noting Theorem 3.3, Theorem 4.2 and above calculations. 		
 	\end{proof}

 	A further research in the context of super-wavelets associated with Parseval frame MRA  on local fields is needed.    Analogous to the Euclidean case, one can define the notion of super-wavelets associated with Parseval frame MRA  on local fields as follows: 
 	
 	\begin{defn}\label{Def4.4}
 		A super-wavelet $(\eta_1, ..., \eta_n)$ is said to be  an \textit{MRA super-wavelet} if for each $i=1, 2, \cdots, n$,  $\eta_i$ is a Parseval frame wavelet  associated with Parseval frame MRA.
 	\end{defn}
 	
 	The above definition is motivated by the Euclidean case in which the following result plays an important role that can be derived analogous to \cite[Proposition 5.16]{HL2005}:
 	
 	\begin{theorem} \label{Thm4.5}
 		Suppose that $V_0 \subset (D\oplus D) V_0$ and the system 
 		$$
 		\{T^k f \oplus T^k g: k \in \mathbb N_0\}
 		$$
 		is an orthonormal basis for $V_0$, where $f, g \in L^2(K)$.  Then$,$ $\bigcup_{j \in \mathbb Z} (D^j\oplus D^j) V_0$ is not dense in $L^2(K)\oplus L^2(K).$ 
 	\end{theorem}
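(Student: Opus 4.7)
The plan is to proceed by contradiction, comparing two estimates of
$$S_j := \|P_j(\chi_{\mathcal O} \oplus 0)\|^2 + \|P_j(0 \oplus \chi_{\mathcal O})\|^2,$$
where $P_j$ denotes the orthogonal projection onto $V_j := (D^j \oplus D^j) V_0$. Applying $D^j \oplus D^j$ to the refinement hypothesis $V_0 \subset V_1$ gives $V_j \subset V_{j+1}$, so $\{V_j\}_{j \in \mathbb Z}$ is an increasing chain. If $\bigcup_j V_j$ were dense in $L^2(K) \oplus L^2(K)$, then $P_j \to I$ strongly as $j \to +\infty$, forcing $S_j \to 2$.

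For the reverse estimate, unitarity of $D^j \oplus D^j$ and the identity $V_j = (D^j \oplus D^j) V_0$ yield $\|P_j \Phi\|^2 = \|P_0(D^{-j} \oplus D^{-j})\Phi\|^2$, hence $S_j = \|P_0(F \oplus 0)\|^2 + \|P_0(0 \oplus F)\|^2$ with $F := D^{-j}\chi_{\mathcal O}$, a unit vector in $L^2(K)$. Expanding against the orthonormal basis $\{T^k f \oplus T^k g\}_{k \in \mathbb N_0}$ of $V_0$, the first summand equals $\sum_k |\langle F, T^k f\rangle|^2$, and similarly the second equals $\sum_k |\langle F, T^k g\rangle|^2$. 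Plancherel, periodisation over the coset partition $\{\mathcal O + u(l)\}$, the relation $\chi_{u(k)}(u(l)) = 1$ from Theorem \ref{Thm2.3}, and Parseval on $L^2(\mathcal O)$ via Theorem \ref{Thm2.1} rewrite each such sum in bracket form, to which pointwise Cauchy--Schwarz gives
$$\sum_{k \in \mathbb N_0} |\langle F, T^k \varphi\rangle|^2 \;=\; \int_{\mathcal O} \Big|\sum_{l \in \mathbb N_0} \widehat F(\xi + u(l)) \overline{\widehat\varphi(\xi + u(l))}\Big|^2 d\xi \;\le\; \int_{\mathcal O} \tau_F(\xi)\, \sigma_\varphi(\xi)\, d\xi,$$
with $\tau_F(\xi) := \sum_l |\widehat F(\xi + u(l))|^2$ and $\sigma_\varphi(\xi) := \sum_l |\widehat\varphi(\xi + u(l))|^2$, for $\varphi \in \{f, g\}$.

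The decisive coupling is the identity $\sigma_f + \sigma_g \equiv 1$ a.e., which follows from the orthonormality of $\{T^k(f \oplus g)\}_{k \in \mathbb N_0}$: the vector-valued analogue of the characterisation preceding Theorem \ref{Thm3.2} is obtained by expanding $\langle T^k(f \oplus g), T^l(f \oplus g)\rangle = \delta_{k, l}$, periodising, and invoking completeness of $\{\chi_{u(k)}\}$ in $L^2(\mathcal O)$. Adding the Cauchy--Schwarz bounds for $\varphi = f$ and $\varphi = g$ gives
$$S_j \;\le\; \int_{\mathcal O} \tau_F(\xi)\bigl(\sigma_f(\xi) + \sigma_g(\xi)\bigr)\, d\xi \;=\; \int_{\mathcal O} \tau_F(\xi)\, d\xi \;=\; \|F\|^2 = 1$$
for every $j$, in direct conflict with $S_j \to 2$. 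The main obstacle, and the conceptual heart of the argument, is precisely this coupling: it pits the rank-one character of $V_0$ (generated by a single element of $L^2(K) \oplus L^2(K)$ under integer translations) against the rank-two character of the super-space $L^2(K) \oplus L^2(K)$, forcing at least half of the $L^2$-mass of the orthogonal test pair $\chi_{\mathcal O} \oplus 0,\ 0 \oplus \chi_{\mathcal O}$ to remain invisible to every $P_j$.
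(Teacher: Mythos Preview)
Your argument is correct. The key steps---reducing to $P_0$ via unitarity of $D^j\oplus D^j$, expanding against the orthonormal basis $\{T^k f\oplus T^k g\}$, periodising to obtain the bracket identity, and then invoking the coupling $\sigma_f+\sigma_g\equiv 1$ together with Cauchy--Schwarz to cap $S_j$ at $1$---are all sound in this setting. One small remark: for the Parseval step on $L^2(\mathcal O)$ you implicitly need the bracket $\sum_l \widehat F(\xi+u(l))\,\overline{\widehat\varphi(\xi+u(l))}$ to lie in $L^2(\mathcal O)$, not just $L^1(\mathcal O)$. This is harmless here because for $j>0$ (which suffices for the contradiction) one has $\widehat F=q^{j/2}\chi_{\mathfrak p^j\mathcal O}$, supported inside $\mathcal O$, so only the $l=0$ term survives and the bracket is bounded; alternatively, Bessel's inequality in place of Parseval already gives the $\leq$ you need.

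As for comparison with the paper: there is nothing to compare. The paper does not supply a proof of Theorem~\ref{Thm4.5}; it only remarks that the result ``can be derived analogous to \cite[Proposition 5.16]{HL2005}''. Your write-up therefore goes well beyond what the paper offers, giving a fully self-contained Fourier-analytic argument adapted to the local-field setting (Theorems~\ref{Thm2.1} and~\ref{Thm2.3} in place of the standard $\mathbb Z$-periodisation). The Han--Larson argument in the Euclidean case is somewhat more operator-theoretic in flavour, working through the local commutant and multiplicity considerations for wandering subspaces; your approach isolates the same rank obstruction---a single translation generator cannot fill a direct sum of two copies---but packages it as an explicit norm estimate, which is arguably more transparent.
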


 	Next, we provide    examples of super-wavelet of length $n$, and   Parseval frame super-wavelet of length $n$    for the local field having positive characteristics:

 	\begin{example}\label{Exa4.6}
 		Consider the functions $\eta_i$, for $i \in \{1, 2, \cdots, n-1\}$ whose Fourier transforms are defined by 
 		$$
 		|\widehat{\eta}_i|=\chi_{\frak p^{i-1} \mathcal O^*} =\chi_{\frak  p^{i-1} (\mathcal O \backslash \frak p \mathcal O)},
 		$$
 		where $n \geq 2$.   Then,  the collection $\{\eta_1, \eta_2, \cdots,  \eta_{n-1}\}$ has  the following properties:
 		\begin{enumerate}
 			\item [(i)] for each $i \in \{1, 2, \cdots, n-1\}$, the system  
 			$$
 			\{ \mathfrak p^j (\frak p^{i-1} \mathcal O^*): j \in \mathbb Z\}
 			$$ 
 			is a measurable partition of $K$ as $\{ \mathfrak p^j   \mathcal O^* : j \in \mathbb Z\}$ is a measurable partition of $K$,  
 			
 			\item [(ii)] for each $i \in \{1, 2, \cdots, n-1\}$, the system  
 			$$
 			\{   \frak p^{i-1} \mathcal O^*+u(k): k  \in \mathbb N_0\}
 			$$
 			is a measurable partition of a subset of  $K$ as $\{\mathcal O +u(k) : k \in \mathbb N_0\}$ is a measurable partition of $K$, and $\frak p^j \mathcal O^* \subset \mathcal O,$ where $j \in \mathbb N_0,$  
 			
 			\item [(iii)]  for $i, j \in \{1, 2, \cdots, n-1\}$ and $k, l \in \mathbb N_0$, we have 
 			$$
 			|(\frak p^{i-1} \mathcal O^*+u(k)) \cap    (\frak p^{j-1} \mathcal O^*+u(l)) |=0,  \ \mbox{for} \ (i, l) \neq (j, k), 	
 			$$
 			since $\frak p^{i-1} \mathcal O^*,  \frak p^{j-1} \mathcal O^* \subset \mathcal O$, the system $\{\mathcal O +u(k): k \in \mathbb N_0\}$ is a measurable partition of $K$, and 
 			$$
 			|\frak p^{i-1} \mathcal O^* \cap  \frak p^{j-1} \mathcal O^*|=0, \  \mbox{for} \  i \neq j,
 			$$
 			as $|x|=\frac{1}{q^{i-1}}$ and $|y|=\frac{1}{q^{j-1}}$, for $x \in \frak p^{i-1} \mathcal O^*$, and $y \in \frak p^{j-1} \mathcal O^*$.  
 		\end{enumerate}
 		Next, let us assume   the set $\frak S \subset K$ be such that $\{\frak {p}^j\frak{S}: j \in \mathbb Z\}$ is a measurable partition of $K,$ and there is a bijective map from $\frak S$ to $\frak {p}^{n-2} \mathcal O$ defined by 
 		$$
 		\xi \longmapsto \xi +u(l),
 		$$
 		for every $\xi \in \frak S$ and for some  $l \in \mathbb N_0$.  
 		Here, the existence of such set follows by noting Theorem 1 of Dai, Larson, and Speegle \cite{DLS1997}. Then, $\Theta=(\eta_1, \eta_2, \cdots,  \eta_n)$ is a super-wavelet of length $n$,  where 
 		$$
 		|\widehat{\eta}_n|=\chi_{\frak S}.
 		$$ 
 		This follows by noting Theorem 4.3 and observing that     
 		for each $i \in \{1, 2, \cdots, n\}$, the function $\eta_i$ is a Parseval frame wavelet in $L^2(K)$, and the set 
 		$$
 		\{\frak p^{i-1} \mathcal O^*+u(k), \frak S+u(l): 1 \leq i \leq n-1; k, l \in \mathbb N_0 \}
 		$$
 		is a measurable partition of $K$. 
 		
 		Further analogous to Euclidean case \cite{DL2011, ZX2012}, if we assume conditions (a) \& (b) of Theorem 4.3 and replace the condition (c) of Theorem 4.3 by \lq\lq \textit{the system 
 			$$
 			\{W_i+u(k): k \in \mathbb N_0, 1 \leq i \leq n\}
 			$$
 			is a measurable partition of a measurable set of $K$}\rq\rq, then we call   $(\eta_1, ..., \eta_n)$ as a \textit{Parseval frame super-wavelets}.    Following is an example of Parseval frame super-wavelet of length $n$. 
 	\end{example}
 	
 	\begin{example}\label{Exa4.7}
 		Consider the functions $\eta_i$, for $i \in \{1, 2, \cdots, n\}$ whose Fourier transforms are defined by 
 		$ 
 		|\widehat{\eta}_i|=\chi_{\frak p^{i} \mathcal O^*}.
 		$  
 		Then,  $\Theta=(\eta_1, \eta_2, \cdots, \eta_n)$ is a Parseval frame super-wavelet of length $n$.  In addition, this is associated with Parseval frame MRA. For more details, see the above Example 4.6, and notice that the system 
 		$$
 		\{\frak p^i \mathcal O^*+u(k): k\in \mathbb N_0, i \in \{1, 2, \cdots, n\}\}
 		$$
 		is a measurable partition of a subset of  $K$. 
 	\end{example}

 	
 	\section{Decomposable Parseval  frame wavelets    for local fields}

 	In this section we study the extendable and decomposable Parseval  frame wavelets and their properties with respect to the local field $K$ of positive characteristics while the same was  studied by   many authors   for the  case of Euclidean space \cite{DDG2004,GH2005,HL2005}.   A Parseval frame wavelet  $\eta$  is said to be  an $n$-\textit{decomposable} $(n > 1)$ if $\eta$ is equivalent to a Parseval frame super-wavelet   of length $n$. By an \textit{equivalent}  Parseval frame  super-wavelets   $(\eta_1, . . . , \eta_m)$ and $(\mu_1, . . . , \mu_n)$, we mean that  there is a unitary operator 
 	$$
 	U : \bigoplus_{i=1}^m L^2(K) \rightarrow  \bigoplus_{i=1}^n L^2(K) $$
 	such that
 	$$
 	U(D^kT^l \eta_1\oplus . . . \oplus D^kT^l \eta_m) = (D^kT^l \mu_1\oplus . . . \oplus D^kT^l \mu_n),
 	$$
 	for all $l \in \mathbb N_0, k\in \mathbb Z$.  The following  result provides a characterization of the equivalence between two Parseval frame super-wavelets:
 	
 	\begin{proposition}\label{Pro5.1}  
 		Suppose that $(\psi_1, . . . , \psi_M)$ and $(\varphi_1, . . . , \varphi_N)$ are Parseval frame super-wavelets. Then they are equivalent if and only if  for a.e. $\xi$ and $n\in \mathbb N_0$, 
 		$$
 		\sum_{j=1}^M\sum_{k \in \mathbb N_0} \widehat{\psi}_j(\frak p^{-n}(\xi+u(k)))\overline{\widehat{\psi}_j(\xi+u(k))}=\sum_{j=1}^N\sum_{k \in \mathbb N_0} \widehat{\varphi}_j(\frak p^{-n}(\xi+u(k)))\overline{\widehat{\varphi}_j(\xi+u(k))}.
 		$$
 	\end{proposition}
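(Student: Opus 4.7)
The plan is to use the standard principle that two Parseval frames indexed by the same set are unitarily equivalent if and only if their Gramian kernels coincide. I would first invoke this for the Parseval frames $\{\bigoplus_{j=1}^M D^kT^l\psi_j\}_{(k,l)\in\mathbb Z\times\mathbb N_0}$ in $\bigoplus_M L^2(K)$ and $\{\bigoplus_{j=1}^N D^kT^l\varphi_j\}_{(k,l)\in\mathbb Z\times\mathbb N_0}$ in $\bigoplus_N L^2(K)$. Writing $\Theta_\psi,\Theta_\varphi$ for the analysis operators, the Parseval property gives $\Theta_\psi^*\Theta_\psi=I$ and $\Theta_\varphi^*\Theta_\varphi=I$, so that $U:=\Theta_\varphi^*\Theta_\psi$ is unitary and intertwines the diagonal action of $D^kT^l$ precisely when the two Gramian kernels agree.

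Next I would reduce the Gramian identity to a single family. By the unitarity of $D$, $\langle\bigoplus_j D^kT^l\psi_j,\bigoplus_j D^{k'}T^{l'}\psi_j\rangle=\sum_j\langle D^{k-k'}T^l\psi_j,T^{l'}\psi_j\rangle$, so equivalence is equivalent to
\[
\sum_{j=1}^M\langle D^nT^l\psi_j,T^{l'}\psi_j\rangle=\sum_{j=1}^N\langle D^nT^l\varphi_j,T^{l'}\varphi_j\rangle,\qquad n\in\mathbb Z,\ l,l'\in\mathbb N_0.
\]
Complex conjugation combined with swapping the roles of $l$ and $l'$ reduces the range of $n$ to $\mathbb N_0$.

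For the computation of these inner products I would apply Plancherel together with the formula $\widehat{D^nT^l f}(\xi)=q^{-n/2}\chi_{u(l)}(-\mathfrak p^n\xi)\widehat f(\mathfrak p^n\xi)$ from Section 3, the change of variable $\xi\mapsto\mathfrak p^{-n}\xi$, and the partition $K=\bigsqcup_{k\in\mathbb N_0}(\mathcal O+u(k))$. Using $\chi_{u(l)}(u(k))=1$ (Theorem \ref{Thm2.3}) together with the identity $\mathfrak p^{-n}u(l')=u(l'q^n)$ that follows from the recursive definition of $u(\cdot)$, I expect to obtain
\[
\sum_{j=1}^M\langle D^nT^l\psi_j,T^{l'}\psi_j\rangle=q^{n/2}\int_{\mathcal O}\chi_{u(l'q^n)-u(l)}(\xi)\,\overline{G^n_\psi(\xi)}\,d\xi,
\]
where
\[
G^n_\psi(\xi):=\sum_{j=1}^M\sum_{k\in\mathbb N_0}\widehat\psi_j(\mathfrak p^{-n}(\xi+u(k)))\overline{\widehat\psi_j(\xi+u(k))}
\]
is integral periodic by Theorem \ref{Thm2.2}(c), and an analogous formula holds for $\varphi$ with $G^n_\varphi$ in place of $G^n_\psi$.

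Finally, specializing to $l'=0$ and varying $l\in\mathbb N_0$, the equality of these inner products says that every Fourier coefficient of $G^n_\psi-G^n_\varphi$ with respect to the orthonormal system $\{\chi_{u(l)}\}_{l\in\mathbb N_0}$ on $\mathcal O$ provided by Theorem \ref{Thm2.1} vanishes; hence $G^n_\psi=G^n_\varphi$ a.e.\ on $\mathcal O$, and integral periodicity extends the equality to a.e.\ $\xi\in K$. Running the same computation in reverse recovers the full family of Gramian identities from the a.e.\ equalities $G^n_\psi=G^n_\varphi$ for $n\in\mathbb N_0$, and then for every $n\in\mathbb Z$ by adjoint symmetry, giving equivalence via the unitary $U$ of the first paragraph. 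The main technical obstacle is the bookkeeping with the characters $\chi_{u(l)}$ under the action of $\mathfrak p^{\pm n}$ on the coset representatives and the verification of integral periodicity of $G^n_\psi$; once these are in place the proposition follows.
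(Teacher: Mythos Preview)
Your proposal is correct and follows essentially the same route as the paper: reduce equivalence to equality of the Gramians $\sum_j\langle D^nT^l\psi_j,T^{l'}\psi_j\rangle$, compute these via Plancherel and the coset decomposition $K=\bigcup_{k}(\mathcal O+u(k))$, and finish using that $\{\chi_{u(l)}\}_{l\in\mathbb N_0}$ is an orthonormal basis of $L^2(\mathcal O)$. The only difference is presentational: you are more explicit about the frame-theoretic principle (constructing $U=\Theta_\varphi^*\Theta_\psi$) and about the reduction to $n\in\mathbb N_0$ via conjugation, both of which the paper simply asserts.
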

 	
 	\begin{proof} 
 		The result follows by noting that  $(\psi_1, . . . , \psi_M)$ and $(\varphi_1, . . . , \varphi_N)$ are Parseval frame super-wavelets if and only if
 		\begin{align*}
 		\sum_{j=1}^M <D^{-n}T^m \psi_j, T^l \psi_j>=\sum_{j=1}^N <D^{-n}T^m \varphi_j, T^l \varphi_j>,
 		\end{align*}
 		for each   $m, n, l \in \mathbb N_0$. Further, notice that for each   $m, n, l \in \mathbb N_0$, we have
 		\begin{align*}
 		\sum_{j=1}^M <D^{-n}T^m \psi_j, T^l \psi_j>=&\sum_{j=1}^M <\widehat{D^{-n}T^m \psi_j}, \widehat{T^l \psi_j}>\\
 		=& \sum_{j=1}^M q^{n/2} \int_{K} \chi_{u(l)}(\frak p^{-n} \xi) \widehat{\psi}_j(\frak p^{-n} \xi)\cdot\chi_{u(l)}(\xi)  \overline{\widehat{\psi}_j}(\xi) d\xi\end{align*}
 		since $\widehat{T^l \psi}(\xi)=\overline{\chi_{u(l)}(\xi)} \widehat{\psi}(\xi)$, and  $\widehat{D^{-n} \psi}(\xi)=q^{n/2}\widehat{\psi}(\frak p^{-n}\xi)$. As the collection $\{\mathcal O +u(k):k \in \mathbb N_0\}$ is a measurable partition of $K$, and for each $r, s \in \mathbb N_0$,  
 		$$
 		u(r q^s)=\frak p^{-s} u(r), \ \mbox{and} \   \ \chi_{u(r)}(u(s))=1,
 		$$
 		therefore we can write above expression as follows:
 		\begin{align*}
 		\sum_{j=1}^M <D^{-n}T^m \psi_j, & T^l \psi_j>\\
 		=&  q^{n/2} \int_{\mathcal O}\chi_{u(l)}(\xi)\left( \chi_{u(l)}(\frak p^{-n} \xi)\sum_{j=1}^M \sum_{k \in \mathbb N_0}  \widehat{\psi}_j(\frak p^{-n}(\xi+u(k)))\overline{\widehat{\psi}_j(\xi+u(k))}\right) d\xi.
 		\end{align*}
 		Similarly, we can write the above expression for $\varphi_i$, and the  result follows by noting that  the collection $\{\chi_{u(k)}(\xi): \xi \in \mathcal O, k \in \mathbb N_0\}$ is an orthonormal basis for $L^2(\mathcal O)$.
 	\end{proof}
 	
 	The following result  gives a necessary condition  for decomposable Parseval frame wavelets:
 	
 	\begin{proposition}\label{Prop5.2}
 		If $\psi$ is a $m$-decomposable Parseval frame wavelet$,$ then
 		$$
 		\int_{\mathcal O} \frac{\sum_{k \in \mathbb N_0} \left|\widehat{\psi}(\xi+u(k))\right|^2}{|\xi|} d\xi \geq m \frac{q-1}{q}.
 		$$
 	\end{proposition}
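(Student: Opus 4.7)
The plan is to reduce the inequality to a single-function estimate for each Parseval frame wavelet component via the equivalence characterization of Proposition 5.1, then prove that estimate using the ultrametric structure of $K$. Throughout, write $G_\mu(\xi) := \sum_{k\in\mathbb N_0}|\widehat{\mu}(\xi+u(k))|^2$ for any $\mu \in L^2(K)$.

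First I would establish the identity
$$\int_K \frac{|\widehat{\mu}(\eta)|^2}{|\eta|}\,d\eta = \frac{q-1}{q}$$
for every Parseval frame wavelet $\mu \in L^2(K)$. Integrating condition (3.1) of Theorem 3.1 over $\mathcal O^*$ gives $|\mathcal O^*| = 1 - q^{-1}$ on the left; swapping sum and integral and substituting $\eta = \frak p^{-j}\xi$ in the $j$-th term reassembles the shells $\frak p^{-j}\mathcal O^*$ into $K^*$, with the Jacobian $q^{-j}$ cancelling the constant value $|\eta| = q^j$ on the $j$-th shell. Next I would prove the pointwise weight comparison
$$\int_{\mathcal O} \frac{G_\mu(\xi)}{|\xi|}\,d\xi \;\geq\; \int_K \frac{|\widehat{\mu}(\eta)|^2}{|\eta|}\,d\eta.$$
Substituting $\eta = \xi+u(k)$ in the $k$-th summand rewrites the left side as $\sum_k\int_{\mathcal O+u(k)} |\widehat{\mu}(\eta)|^2 / |\eta - u(k)|\,d\eta$. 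For $k=0$ the denominator is already $|\eta|$; for $k\geq 1$, Theorem 2.2(a) gives $|u(k)|\geq q$, so for $\xi\in\mathcal O$ the strong triangle inequality forces $|\eta|=|\xi+u(k)|=|u(k)|$, whence $|\eta - u(k)| = |\xi|\leq 1\leq|\eta|$ and hence $1/|\eta-u(k)| \geq 1/|\eta|$. Summing over the partition $\{\mathcal O+u(k)\}_{k\in\mathbb N_0}$ of $K$ delivers the claim.

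To finish, since $\psi$ is $m$-decomposable, there is a Parseval frame super-wavelet $(\mu_1,\ldots,\mu_m)$ equivalent to $\psi$ viewed as a length-$1$ super-wavelet. Proposition 5.1 with $n=0$ then yields the pointwise identity $G_\psi(\xi) = \sum_{i=1}^m G_{\mu_i}(\xi)$. Dividing by $|\xi|$, integrating over $\mathcal O$, and applying the two preceding steps to each $\mu_i$, which is a Parseval frame wavelet by Definition 4.1, produces the lower bound $m(q-1)/q$.

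The main obstacle is the weight comparison in the second step: it is an ultrametric phenomenon, where the strong triangle inequality makes $|\xi+u(k)|$ constantly equal to $|u(k)|$ on the coset $\mathcal O + u(k)$ whenever $k\geq 1$, so that the denominator $|\xi|$ on the left is uniformly no larger than the corresponding $|\eta|$ on the right. Everything else is routine bookkeeping once this inequality and the universal identity of the first step are in hand.
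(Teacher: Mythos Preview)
Your proposal is correct and follows essentially the same route as the paper: both arguments use Proposition~5.1 (with $n=0$) to pass from $G_\psi$ to $\sum_i G_{\mu_i}$, then exploit the ultrametric inequality $|\xi| = |\eta-u(k)| \le |\eta|$ on each coset $\mathcal O+u(k)$ to bound the integral below by $\sum_i\int_K|\widehat{\mu_i}(\eta)|^2/|\eta|\,d\eta$, and finally evaluate each summand as $(q-1)/q$ via the shell decomposition $K^*=\bigsqcup_j \frak p^{-j}\mathcal O^*$ together with condition~(3.1). The only difference is organizational: you isolate the two single-function lemmas first and then combine, whereas the paper runs the computation in one pass after decomposing.
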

 	
 	\begin{proof}   
 		Suppose $\psi$ is decomposable into Parseval frame wavelets  $f_1, \cdots, f_m$, and $$
 		I=  \int_{\mathcal O} \frac{\sum_{k \in \mathbb N_0}  \left|\widehat{\psi}(\xi+u(k))\right|^2}{|\xi|} d\xi.
 		$$
 		Then we have
 		$$
 		\sum_{k \in \mathbb N_0} \left|\widehat{\psi}(\xi+u(k))\right|^2=\sum_{i=1}^m\sum_{k \in \mathbb N_0} \left|\widehat{f_i}(\xi+u(k))\right|^2,
 		$$
 		and hence by applying integrals on both sides, we have
 		\begin{align*}
 		I=    \sum_{i=1}^m\sum_{k \in \mathbb N_0} \int_{\mathcal O} \frac{\left|\widehat{f_i}(\xi+u(k))\right|^2}{|\xi|} d\xi
 		=   \sum_{i=1}^m\sum_{k \in \mathbb N_0} \int_{\mathcal O+u(k)} \frac{\left|\widehat{f_i}(\xi)\right|^2}{|\xi-u(k)|} d\xi.
 		\end{align*}
 		Notice  that for each $k \in \mathbb N_0$,  we have   
 		$$
 		|\xi-u(k)| \leq |\xi|,
 		$$ 
 		where $\xi \in \mathcal O +u(k)$, which follows by observing Theorem 2.2 along with  the fact  
 		$$
 		|\xi|=|\eta+u(k)|=max\{|\eta|, |u(k)|\}=|u(k)|,
 		$$
 		since $|\eta|< |u(k)|$, for $\eta \in \mathcal O$ and $k \geq 1$. Therefore, we have
 		\begin{align*}
 		I \geq &  \sum_{i=1}^m\sum_{k \in \mathbb N_0} \int_{\mathcal O+u(k)} \frac{\left|\widehat{f_i}(\xi)\right|^2}{|\xi|} d\xi=\sum_{i=1}^m  \int_{K} \frac{\left|\widehat{f_i}(\xi)\right|^2}{|\xi|} d\xi
 		\end{align*}
 		since $\{\mathcal O+ u(k): k\in \mathbb N_0\}$ is a measurable partition of $L^2(K)$. Further, notice that 
 		$$
 		\left\{p^{j}(\mathcal O \backslash \frak{p} \mathcal O): j \in \mathbb Z\right\}
 		$$
 		is a measurable partition of $K$ since $\bigcup_{j \in \mathbb Z} \frak p^{j}\mathcal O=K, \   a.e.$ and $\frak{p} \mathcal O \subset \mathcal O$. Therefore, we have
 		\begin{align*}
 		I \geq &   \sum_{i=1}^m  \int_{K} \frac{\left|\widehat{f_i}(\xi)\right|^2}{|\xi|} d\xi=\sum_{i=1}^m \sum_{j \in \mathbb Z} \int_{p^{j}(\mathcal O \backslash \frak{p} \mathcal O)} \frac{\left|\widehat{f_i}(\xi)\right|^2}{|\xi|} d\xi\\
 		= &  \sum_{i=1}^m \int_{\mathcal O \backslash \frak{p} \mathcal O} \frac{ \sum_{j \in \mathbb Z}\left|\widehat{f_i}(p^{j} \xi)\right|^2}{|\xi|} d\xi
 		=   \sum_{i=1}^m \int_{ \mathcal O \backslash \frak{p} \mathcal O} \frac {1}{|\xi|} d\xi
 		\end{align*}
 		since for each $i \in \{1, 2, \cdots, m\}$, we have 
 		$$
 		\sum_{j \in \mathbb Z}\left|\widehat{f_i}(p^{j} \xi)\right|^2=1, \   a.e. \ \xi
 		$$
 		because $f_i$ is a Parseval frame wavelet. Thus, the result follows by noting that
 		$$
 		\int_{ \mathcal O \backslash \frak{p} \mathcal O} \frac {1}{|\xi|} d\xi=|\mathcal O \backslash \frak{p} \mathcal O|=\frac{q-1}{q}
 		$$
 		as the Haar measure on $K^{\ast}$ is given by ${d\xi}/{|\xi|}$ and $\mathcal O \backslash \frak{p} \mathcal O$ is a group of units in $K^\ast$.
 	\end{proof}
 	
 	\begin{example}\label{Exa5.3}
 		Let $\mathcal O^*=\mathcal O \backslash \frak {p} \mathcal O$ and $|\widehat{\psi}|=\chi_{\mathcal O^*}$. Then,  $\psi$ is a Parseval frame wavelet in view of Theorem 3.3. Moreover, when $\xi \in \mathcal O$, $\sum_{k \in \mathbb N_0} \left|\widehat{\psi}(\xi+u(k))\right|^2=\chi_{\mathcal O^*}(\xi)$, hence
 		\begin{align*}
 		\int_{\mathcal O} \frac{\sum_{k \in \mathbb N_0} \left|\widehat{\psi}(\xi+u(k))\right|^2}{|\xi|} d\xi= \int_{\mathcal O^*} \frac{1}{|\xi|}d\xi=|\mathcal O^*|=\frac{q-1}{q} < m \frac{q-1}{q},
 		\end{align*}
 		for any $m \geq 2$. Therefore, $\psi$ is not decomposable in view of above Proposition  5.2.
 	\end{example}
 	
 	In view of definition of super-wavelet (or a Parseval frame super-wavelet) $(\eta_1, . . . , \eta_n)$, $\eta_i$ is necessarily a Parseval frame wavelet   for $L^2(K)$, for each $i\in \{1, . . , n\}$.  A Parseval frame wavelet   $\eta_1$ is \textit{extendable} to a super-wavelet of length $n$ (or $n$-\textit{extendable}) if there exist Parseval frame wavelets $\eta_2,  . . . , \eta_n$ such that $(\eta_1, . . . , \eta_n)$ is a super-wavelet of length $n$. The following result  gives necessary condition  for extendable super-wavelets:

 	\begin{proposition}\label{Pro5.4}
 		If $\psi$ is a   Parseval  frame wavelet  and $\psi$ is extendable to a super-wavelet of length $m+1$, where    $m\in \mathbb N$, then
 		$$
 		J\equiv \int_{\mathcal O} \frac{1-\sum_{k \in \mathbb N_0} \left|\widehat{\psi}(\xi+u(k))\right|^2}{|\xi|} d\xi \geq m \frac{q-1}{q}.
 		$$
 	\end{proposition}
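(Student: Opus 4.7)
The plan is to reduce Proposition 5.4 to the same estimate that appears inside the proof of Proposition 5.2, using extendability to write $1 - \sum_{k} |\widehat{\psi}(\xi+u(k))|^2$ as a sum of $m$ nonnegative terms, each of which integrates to at least $\frac{q-1}{q}$.

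First I would unpack the hypothesis. Since $\psi$ is $(m+1)$-extendable, there exist Parseval frame wavelets $\eta_2, \ldots, \eta_{m+1}$ such that $(\psi, \eta_2, \ldots, \eta_{m+1})$ is a super-wavelet of length $m+1$. Applying condition (iii) of Theorem \ref{Thm4.2} with $j=0$ to this super-wavelet yields
\begin{align*}
\sum_{k \in \mathbb N_0} |\widehat{\psi}(\xi+u(k))|^2 + \sum_{i=2}^{m+1} \sum_{k \in \mathbb N_0} |\widehat{\eta}_i(\xi+u(k))|^2 = 1, \quad \text{a.e. } \xi.
\end{align*}
Rearranging and dividing by $|\xi|$, then integrating over $\mathcal O$, gives
\begin{align*}
J = \sum_{i=2}^{m+1} \int_{\mathcal O} \frac{\sum_{k \in \mathbb N_0} |\widehat{\eta}_i(\xi+u(k))|^2}{|\xi|}\, d\xi.
\end{align*}

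Next I would apply, term-by-term, precisely the lower-bound chain used in the proof of Proposition \ref{Prop5.2}. For each fixed $i \in \{2, \ldots, m+1\}$, the change of variable $\xi \mapsto \xi - u(k)$ transforms the inner integral into $\int_{\mathcal O + u(k)} |\widehat{\eta}_i(\xi)|^2/|\xi - u(k)|\, d\xi$, and the ultrametric inequality gives $|\xi - u(k)| \leq |\xi|$ on $\mathcal O + u(k)$ (for $k=0$ trivially, and for $k \geq 1$ because $|\xi| = |u(k)|$ dominates $|\xi - u(k)| \leq |u(k)|$). Since $\{\mathcal O + u(k): k \in \mathbb N_0\}$ partitions $K$, summing in $k$ yields $\int_K |\widehat{\eta}_i(\xi)|^2/|\xi|\, d\xi$. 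Partitioning $K$ as $\bigsqcup_{j \in \mathbb Z} \mathfrak p^j (\mathcal O \setminus \mathfrak p \mathcal O)$ and using the Parseval frame wavelet identity $\sum_{j \in \mathbb Z} |\widehat{\eta}_i(\mathfrak p^j \xi)|^2 = 1$ (Theorem \ref{Thm4.2}(i) for each $\eta_i$) collapses this to $\int_{\mathcal O \setminus \mathfrak p \mathcal O} d\xi/|\xi| = \frac{q-1}{q}$.

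Summing the $m$ resulting lower bounds gives $J \geq m \frac{q-1}{q}$. There is no real obstacle here; the argument is a direct parallel to Proposition \ref{Prop5.2}, with the single new input being the identity that expresses the ``defect'' $1 - \sum_k |\widehat{\psi}(\xi+u(k))|^2$ as a sum of the corresponding quantities for the extending wavelets. The only mild care needed is in invoking Theorem \ref{Thm4.2}(iii) at $j=0$, which does not require the super-wavelet to be orthonormal and therefore applies equally in the Parseval frame super-wavelet formulation relevant here.
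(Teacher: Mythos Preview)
Your proposal is correct and follows essentially the same approach as the paper: both use extendability to write $1-\sum_k|\widehat{\psi}(\xi+u(k))|^2$ as $\sum_{i}\sum_k|\widehat{f_i}(\xi+u(k))|^2$ for $m$ Parseval frame wavelets, and then invoke verbatim the lower-bound chain from Proposition~\ref{Prop5.2}. The only cosmetic difference is that the paper simply asserts the identity (coming from orthonormality of the super-wavelet) rather than citing Theorem~\ref{Thm4.2}(iii) explicitly; your closing remark about the Parseval frame super-wavelet case is unnecessary here, since by hypothesis the extension is to a genuine super-wavelet.
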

 	
 	\begin{proof}
 		Suppose a Parseval frame wavelet $\psi$ is extendable to a super-wavelet of length $m+1$,  where $m\in \mathbb N$. Then, there are Parseval frame wavelets  $f_1, \cdots, f_m$ such that for almost every $\xi \in K$, we have
 		$$
 		\sum_{k \in \mathbb N_0} \left|\widehat{\psi}(\xi+u(k))\right|^2+\sum_{i=1}^m\sum_{k \in \mathbb N_0} \left|\widehat{f_i}(\xi+u(k))\right|^2=1,
 		$$
 		and hence we obtain
 		\begin{align*}
 		J=&\int_{\mathcal O} \frac{\sum_{i=1}^m\sum_{k \in \mathbb N_0} \left|\widehat{f_i}(\xi+u(k))\right|^2}{|\xi|}
 		d\xi
 		= \sum_{i=1}^m\sum_{k \in \mathbb N_0}\int_{\mathcal O} \frac{  \left|\widehat{f_i}(\xi+u(k))\right|^2}{|\xi|}
 		d\xi\\
 		\geq &   \sum_{i=1}^m  \int_{K} \frac{\left|\widehat{f_i}(\xi)\right|^2}{|\xi|} d\xi 
 		=    \sum_{i=1}^m \int_{ \mathcal O \backslash \frak{p}\mathcal O} \frac {1}{|\xi|} d\xi \\
 		= & m \frac{q-1}{q}.
 		\end{align*}
 		This computation follows same as the proof of Proposition 5.2.
 	\end{proof}

 	
 	\begin{acknowledgement}
 		The authors would like to express their sincere thanks to anonymous referee for   valuable   suggestions which definitely improved the presentation of this article.
 	\end{acknowledgement}
 	
 {\footnotesize

 }
 \noindent 	Niraj K. Shukla \hfill Saurabh Chandra Maury\\
 	Discipline of Mathematics  \hfill 	Faculty of Mathematical and Statistical Sciences\\
 	Indian Institute of Technology Indore \hfill Shri Ramswaroop Memorial University \\
 	Indore, India- 453 552 \hfill Barabanki, India \\
 	Email: o.nirajshukla@gmail.com, nirajshukla@iiti.ac.in  \hfill smaury94@gmail.com
 	  \end{document}